\theoremstyle{plain}
\newtheorem{algorithm}{Algorithm}[section]
\newtheorem{corollary}[algorithm]{Corollary}
\newtheorem{definition}[algorithm]{Definition}
\newtheorem{lemma}[algorithm]{Lemma}
\newtheorem{theorem} [algorithm] {Theorem}
\newtheorem{proposition}[algorithm]{Proposition}
\newtheorem{remark}[algorithm]{Remark}
\newtheorem*{Remark}{Remark}
\numberwithin{equation}{algorithm}
\newtheorem*{Main}{Main Theorem}
\newtheorem{DSthm}[algorithm]{Double Soul Theorem}
\def\lpq{L_{p, q}}
\def\rrr{\mathbb{R}}
\def\ccc{\mathbb{C}}
\def\zzz{\mathbb{Z}}
\DeclareMathOperator{\Isom}{Isom}
\DeclareMathOperator{\Fix}{Fix}
\def\bdm{\begin{displaymath}}
\def\edm{\end{displaymath}}
\def\beq{\begin{equation}}
\def\eeq{\end{equation}}
\def\bes{\begin{equation*}}
\def\ees{\end{equation*}}
\def\epcm{\end{picture}\end{center}\end{minipage}}
\def\bpcm{\begin{minipage}{80pt}\begin{center}\begin{picture}}
\def\t2{T^2}
\def\f4{F_4}
\def\g2{G_2}
\def\p2{\frac{\pi}{2}}
\def\Fix{\textrm{Fix}}
\def\rk{\textrm{rk}}
\def\dim{\textrm{dim}}
 \numberwithin{equation}{section}
  \numberwithin{figure}{section}
\newtheorem*{ack}{Acknowledgements}
\begin{document}
\newcommand{\comment}[1]{\vspace{5 mm}\par \noindent
\marginpar{\textsc{Note}}
\framebox{\begin{minipage}[c]{0.95 \textwidth}
#1 \end{minipage}}\vspace{5 mm}\par}

\title[Non-negatively curved $6$-manifolds with almost maximal symmetry rank]{Non-negatively curved $6$-manifolds with almost maximal symmetry rank }

\author[Escher]{Christine Escher}
\address[Escher]{Department of Mathematics, Oregon State University, Corvallis, Oregon}
\email{tine@math.orst.edu}

\author[Searle]{Catherine Searle}
\address[Searle]{Department of Mathematics, Statistics, and Physics, Wichita State University, Wichita, Kansas}
\email{searle@math.wichita.edu}

\subjclass[2000]{Primary: 53C20; Secondary: 57S25} 

\date{\today}

\begin{abstract}

We classify closed, simply-connected, non-negatively curved $6$-manifolds of almost maximal symmetry rank up to equivariant diffeomorphism.

\end{abstract}
\maketitle

\section{Introduction}

For the class of closed, simply-connected Riemannian manifolds there are no known obstructions that allow us to distinguish between positive and non-negative sectional curvature, in spite of the fact that the number of known examples of manifolds of non-negative sectional curvature is vastly larger than those known to admit a metric of positive sectional curvature.

The introduction of symmetries, however,  allows us to distinguish between such classes. An important first case to understand is that of maximal symmetry rank, where the symmetry rank is defined to be the rank of the isometry group:
$\text{sym}\rk(M^n)=\rk(\Isom(M^n)).$
For manifolds of strictly positive sectional curvature, a classification up to equivariant diffeomorphism was obtained by Grove and Searle \cite{GS}. They showed that for such manifolds, the maximal symmetry rank is equal to $\lfloor (n+1)/2 \rfloor$. For closed, simply-connected manifolds of non-negative sectional curvature, the maximal symmetry rank is conjectured to be $\lfloor 2n/3\rfloor$ (see Galaz-Garc\'ia and Searle \cite{GGS1} and Escher and Searle \cite{ES}). A classification for the latter has been obtained, but only in dimensions less than or equal to nine (see \cite{GGS1} and Galaz-Garc\'ia and Kerin \cite{GGK}, for dimensions less than or equal to $6$ and \cite{ES} for dimensions $7$ through $9$) and the upper bound for the symmetry rank has been verified for dimensions less than or equal to $12$ (see \cite{GGS1} and \cite{ES}).

A natural next step is  the case of almost maximal symmetry rank. In positive curvature, a homeomorphism classification was obtained by Rong \cite{Ro}, in dimension $5$,  and Fang and Rong \cite{FR}, for dimensions greater than or equal to $8$, using work of Wilking \cite{Wi1}. In non-negative curvature, a  homeomorphism classification was obtained independently by Kleiner \cite{K} and Searle and Yang \cite{SY}, in dimension $4$. This classification was later improved to equivariant diffeomorphism by Grove and Wilking \cite{GW}. A diffeomorphism classification in dimension $5$ was obtained by Galaz-Garc\'ia and Searle \cite{GGS2}. 

In this article we consider closed, simply-connected Riemannian $6$-manifolds admitting a metric of non-negative sectional curvature and an effective, isometric
torus action of almost maximal symmetry rank and prove the following classification theorem.

\begin{Main} Let $T^3$ act isometrically and effectively on $M^6$, a closed, simply-connected, non-negatively curved Riemannian manifold. Then $M^6$ is equivariantly diffeomorphic to $S^3\times S^3$ or a torus manifold.
\end{Main}

  Closed, orientable manifolds of dimension $2n$ admitting a smooth $T^n$-action with non-empty fixed point set, are called {\em torus manifolds}. 
Non-negatively curved torus manifolds were classified up to equivariant diffeomorphism by Wiemeler \cite{Wie} (see Theorem \ref{Wiemeler}). 
 In dimension $6$, they are equivariantly diffeomorphic to $S^6$, $\ccc P^3=S^7/T^1$, or the quotient by (1), a free linear circle action on $S^3\times S^4$, (2), a free linear $T^2$-action on 
 $S^3\times S^5$ or (3), a free linear $T^3$-action on $S^3\times S^3\times S^3$. In the process of classifying bi-quotients of dimension $6$, De Vito \cite{DV} has given a  classification of these manifolds up to diffeomorphism.
   It is worth noting that Kuroki \cite{Ku}, using torus graphs, has obtained an Orlik-Raymond type classification of $6$-dimensional torus manifolds with vanishing odd degree cohomology without curvature restrictions.
   
 \begin{Remark} In all cases except one, $M^6$ is equivariantly diffeomorphic to one of the above manifolds with a linear torus action. In the case where $M^6$ admits only rank one isotropy and the $T^3$-action is $S^1$-fixed point homogeneous then we can only say that $M^6$ is equivariantly diffeomorphic to $S^3\times S^3$ with a smooth $T^3$-action.
  \end{Remark}

 The paper is organized as follows. Section \ref{2} covers preliminary material required for the proof of the Main Theorem. Section \ref{3} contains two general results about manifolds that decompose 
 as disk bundles without curvature restrictions. In Section \ref{4}, we prove the Main Theorem.
\begin{ack} 
Both authors would like to thank Michael Wiemeler and a referee for pointing out an omission in a previous version of the paper. We are also indebted to the same referee for many helpful comments and suggestions.
This material is based in part upon work supported by the National Science Foundation under Grant No. DMS-1440140 while both authors were in residence at the Mathematical Sciences Research Institute in Berkeley, California, during the Spring 2016 semester. Catherine Searle would also like to acknowledge support  by grants from the National Science Foundation (\#DMS-1611780), as well as  from the Simons Foundation (\#355508, C. Searle).
 
\end{ack}
\section{Preliminaries}\label{2}

In this section we will gather basic results and facts about transformation groups,  the topological classification of six dimensional manifolds and $G$-invariant manifolds of non-negative curvature.

\subsection{Transformation Groups}\label{s2}

Let $G$ be a compact Lie group acting on a smooth manifold $M$. We denote by $G_x=\{\, g\in G : gx=x\, \}$ the \emph{isotropy group} at $x\in M$ and by $G(x)=\{\, gx : g\in G\, \}\simeq G/G_x$ the \emph{orbit} of $x$. Orbits are called {\em principal}, {\em exceptional} or {\em singular}, depending on the relative size of their isotropy subgroups; that is, principal orbits correspond to those orbits with the smallest possible isotropy subgroup, an orbit is called exceptional when its isotropy subgroup is a finite extension of the principal isotropy subgroup and singular when its isotropy subgroup is of strictly larger dimension than that of the principal isotropy subgroup.  

The \emph{ineffective kernel} of the action is the subgroup $K=\cap_{x\in M}G_x$. We say that $G$ acts \emph{effectively} on $M$ if $K$ is trivial. The action is called \emph{almost effective} if $K$ is finite. 

 We will sometimes denote the \emph{fixed point set} $M^G=\{\, x\in M : gx=x, g\in G \, \}$ of the $G$-action by $\Fix(M ; G )$.  One measurement for the size of a transformation group $G\times M\rightarrow M$ is the dimension of its orbit space $M/G$, also called the {\it cohomogeneity} of the action. This dimension is clearly constrained by the dimension of the fixed point set $M^G$  of $G$ in $M$. In fact, $\dim (M/G)\geq \dim(M^G) +1$ for any non-trivial action with fixed points. In light of this, the {\it fixed-point cohomogeneity} of an action, denoted by $\textrm{cohomfix}(M;G)$, is defined by
\[
\textrm{cohomfix}(M; G) = \dim(M/G) - \dim(M^G) -1\geq 0.
\]
A manifold with fixed-point cohomogeneity $0$ is also called a {\it $G$-fixed point homogeneous manifold}.

\subsection{Topological Classification of $6$-manifolds}

Note that throughout the paper we will use the convention that all homology groups have integer coefficients, unless otherwise specified.

The topological classification of simply-connected, closed, oriented $6$-manifolds has been completed in a sequence of articles by C.T.C. Wall \cite{Wa}, P. Jupp \cite{J}, and 
A. \v{Z}ubr \cite{Z1, Z2, Z3}.   We will focus on the classification of closed, simply-connected, oriented $6$-manifolds with torsion free homology.  The classification theorem below is due to C. T. C. Wall in the case of smooth spin manifolds, \cite{Wa}, and in the final form due to P. Jupp \cite{J}.  
We first describe the basic invariants used to classify $6$-dimensional, closed,  simply-connected, oriented, smooth manifolds,  $M$, with torsion free homology \cite{J}.

\begin{theorem}\cite{J}  Let $M$ be a $6$-dimensional, closed,  simply-connected, oriented, smooth manifold with torsion free homology. The basic invariants 
used to classify $M$ are as enumerated below.
\begin{enumerate}
\item $H:= H^2(M)$, a finitely generated free abelian group;
\item  $b:= b_3(M) = \text{rk}_\zzz(H^3(M)) \in 2\zzz$ since $H^3(M)$ admits a non-degenerate symplectic form;
\item $F:=F_M: H^2(M) \otimes H^2(M)  \otimes H^2(M)  \longrightarrow \zzz$ a symmetric trilinear form given by the cup product evaluated on the orientation class;
\item $p:= p_1(M) \in H^4(M)$, the first Pontrjagin class;
\item $w:=w_2(M) \in H^2(M;\zzz_2)$, the second Stiefel-Whitney class.
\end{enumerate}
\end{theorem}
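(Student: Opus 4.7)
The statement is essentially a compendium of the basic invariants used by Wall and Jupp to classify such manifolds, so the task is to verify that each item in the list is well defined with the claimed algebraic structure. My plan is to dispatch the five items using Poincar\'e duality, the universal coefficient theorem, graded commutativity of the cup product, and the standard definitions of characteristic classes.

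For item (1), since $M$ is closed and simply connected, $H_1(M)=0$, so the universal coefficient theorem gives $H^2(M)\cong \mathrm{Hom}(H_2(M),\zzz)\oplus \mathrm{Ext}(H_1(M),\zzz)=\mathrm{Hom}(H_2(M),\zzz)$; the closedness of $M$ makes $H_2(M)$ finitely generated, and the torsion-free hypothesis is compatible with (indeed forces) $H^2(M)$ to be a finitely generated free abelian group. For item (2), Poincar\'e duality together with the orientation class $[M]\in H_6(M)$ yields the non-degenerate pairing
\[
H^3(M)\otimes H^3(M)\longrightarrow H^6(M)\cong \zzz,\qquad \alpha\otimes\beta\longmapsto \langle \alpha\cup\beta,[M]\rangle.
\]
Graded commutativity gives $\alpha\cup\beta=(-1)^{3\cdot 3}\beta\cup\alpha=-\beta\cup\alpha$, so the pairing is skew-symmetric; a non-degenerate skew form on a finitely generated free abelian group must have even rank, so $b_3(M)\in 2\zzz$ and the form is symplectic, as claimed.

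For item (3), set $F(\alpha,\beta,\gamma):=\langle \alpha\cup\beta\cup\gamma,[M]\rangle$; trilinearity is clear from the bilinearity of cup product, and symmetry follows from graded commutativity since all three classes are of even degree and therefore commute with each other. Items (4) and (5) are simply the definitions $p_1(M):=p_1(TM)\in H^4(M)$ and $w_2(M):=w_2(TM)\in H^2(M;\zzz_2)$ applied to the tangent bundle of $M$, with no further verification needed beyond the standard construction of Pontrjagin and Stiefel--Whitney classes.

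The only conceptually nontrivial point is item (2): the evenness of $b_3$. Once one recognizes that Poincar\'e duality produces a perfect pairing on $H^3$ and that the cup product in the middle odd dimension is skew-symmetric, the rest of the statement is essentially bookkeeping with universal coefficients and the standard characteristic class dictionary.
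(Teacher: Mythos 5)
Your verification is correct. The paper itself offers no proof of this statement --- it is cited directly to Jupp \cite{J} as a summary of the invariants --- so there is no ``paper proof'' to compare against; your argument is the standard verification one would perform, and the emphasis on item (2) as the only conceptually nontrivial point is exactly right.

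Two small refinements worth noting. In item (1), the torsion-freeness of $H^2(M)$ does not actually need the torsion-free homology hypothesis: since $M$ is simply connected, $H_1(M)=0$, and the universal coefficient theorem already gives $H^2(M)\cong\mathrm{Hom}(H_2(M),\zzz)$, which is free even if $H_2(M)$ has torsion. Where the torsion-free hypothesis \emph{is} used is in item (2): to conclude that $H^3(M)$ is itself a finitely generated free abelian group (so that ``non-degenerate skew form on a free abelian group has even rank'' applies), you need $\mathrm{Ext}(H_2(M),\zzz)=0$, which is precisely what torsion-freeness of $H_2(M)$ guarantees. You tacitly assumed $H^3(M)$ free without flagging this; inserting that one observation closes the only real gap in the argument.
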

We now use Poincar\'{e} duality to identify $H^4(M)$ with $\text{Hom}_\zzz(H^2(M);\zzz)$ so that $p_1(M)$ can be interpreted as a linear form on $H^2(M)$ and we let $x \cdot y \cdot z$ denote $F_M(x \otimes y \otimes z)$ for $x,y,z \in H^2(M)$.  
\begin{definition}{\bf (Admissibility)}
 
The system of invariants $(H, b, w, F, p)$ is called {\em admissible} if and only if for every $\omega \in H$ and $T \in H^*:=\text{Hom}_\zzz(H;\zzz)$ with $\rho_2(\omega)=w$ and $\rho_2(T) = 0$ where $\rho_2: \zzz \longrightarrow \zzz_2$ is reduction modulo $2$, the following congruence holds:
$$\omega^3 \equiv (p + 24 T)\, \omega \mod 48.$$
\end{definition}
\begin{definition}{\bf (Equivalence)}
 Two systems $(H, b, w, F, p)$ and $(H', b', w', F', p')$ are called {\em equivalent} if and only if $b = b'$ and there exists an isomorphism $\alpha: H \longrightarrow H'$ such that $\alpha(w) = w', \alpha^*(F') = F,\alpha^*(p') = p.$
\end{definition}
We are now ready to state the classification result:
 \begin{theorem}\cite{J}  The assignment 
 $$M \mapsto (\frac{b}{2}, H^2(M), w_2(M), F_M, p_1(M))$$
 induces a 1-1 correspondence between oriented diffeomorphism classes of simply-connected, closed, oriented,  $6$-dimensional, smooth manifolds
 with torsion free homology, and equivalence classes of admissible systems of invariants.  
 \end{theorem}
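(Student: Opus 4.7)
The plan is to follow the handlebody-theoretic strategy of Wall \cite{Wa}, as extended by Jupp to the torsion-free non-spin case. Four pieces must be assembled: well-definedness of the invariants under oriented diffeomorphism, necessity of admissibility, realization of every admissible system by some $6$-manifold, and oriented diffeomorphism of any two manifolds sharing equivalent invariant data (with the diffeomorphism inducing the prescribed isomorphism $\alpha$).

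Well-definedness of $H^2(M)$, $b_3(M)$, $w_2(M)$, $F_M$, and $p_1(M)$ is immediate from the naturality of cohomology and characteristic classes; evenness of $b_3$ follows from the non-degenerate skew form on $H^3(M)$ produced by cup product and Poincar\'e duality. For necessity of the admissibility congruence, I would invoke the index theorem: any integral lift $\omega\in H^2(M)$ of $w_2$ equips $M$ with a $\operatorname{Spin}^{c}$-structure, and the Atiyah--Singer index of the associated twisted Dirac operator equals $\tfrac{1}{48}(\omega^3 - p_1\omega)[M]$, as computed from $\hat A(M)\,e^{\omega/2}$ in degree $6$. Integrality of this index, combined with the ambiguity $p\leadsto p+24T$ permitted when $\rho_2(T)=0$, yields exactly the stated mod-$48$ congruence.

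For realization, one constructs $M$ by an explicit handle decomposition prescribed by the data: begin with $D^6$; attach $2$-handles indexed by a basis of $H$ with framings chosen to realize $w$ and $p_1$; attach $3$-handles in canceling pairs to build $H^3$ equipped with the prescribed symplectic form; attach $4$-handles dual to the $2$-handles so that the triple cup product realizes $F$; and finally cap with a $6$-handle. Admissibility is precisely the obstruction to smooth closure at this last step. For uniqueness, given $(M,M')$ with equivalent data, I would construct a cobordism $W^7$ between them realizing $\alpha$ on $H^2$ via a handle presentation built from $\alpha$, then surger $W^7$ below the middle dimension (permissible since $\pi_1(M)=1$) to convert it into an $h$-cobordism. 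The smooth $h$-cobordism theorem, applicable since $\dim M=6\geq 5$ and $M$ is simply-connected, then delivers the oriented diffeomorphism $M\cong M'$ realizing $\alpha$.

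The chief obstacle is the realization step in the non-spin case: one must coordinate the integer framings of the $2$-handles with the mod-$2$ constraint from $w_2$ and the mod-$48$ integrality encoded in admissibility simultaneously, and Jupp's contribution over Wall's original spin-manifold classification lies precisely in this delicate handle-framing bookkeeping. A secondary subtlety is engineering the cobordism $W^7$ so that it actually realizes the given $\alpha$, which requires a careful choice of $2$- and $3$-handle attachments to $M\times I$ rather than any $h$-cobordism from $M$ to $M'$.
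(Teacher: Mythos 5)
The paper does not prove this statement; it imports Jupp's classification theorem \cite{J} as a black box, so there is no in-paper proof to compare against. Your outline is broadly faithful to the actual Wall--Jupp strategy: compute the invariants, verify the admissibility constraint by an index-theoretic integrality argument, realize admissible systems by an explicit handlebody construction, and prove uniqueness by building a cobordism between $M$ and $M'$ realizing $\alpha$ and then surgering below the middle dimension to obtain an $h$-cobordism (valid in dimension $6$ since $\pi_1=1$). Two cautions. First, the remark that index integrality ``combines with'' the ambiguity $p\mapsto p+24T$ for $\rho_2(T)=0$ is vacuous: $\rho_2(T)=0$ forces $T$ to be divisible by $2$, so $24T\equiv 0\pmod{48}$, and the genuine content is the single congruence $\omega^3\equiv p\,\omega\pmod{48}$ for integral lifts $\omega$ of $w$, which your $\mathrm{Spin}^c$ index computation of $\tfrac{1}{48}(\omega^3-p_1\omega)[M]$ correctly yields. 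Second, the realization step is the crux and your handle-attachment recipe glosses over it; in particular, Wall's splitting $M=M_0\,\sharp\,\tfrac{b}{2}(S^3\times S^3)$ (which the paper records as a corollary) is how one actually disposes of the rank-$b$ part in $H^3$ and reduces to classifying cores with $b_3=0$, rather than trying to prescribe $H^3$ and the trilinear form $F_M$ simultaneously by handle attachments as your sketch suggests. As written this is a plausible program rather than a proof, and the delicate framing and obstruction bookkeeping that constitutes the substance of Wall's and Jupp's papers is only named, not supplied.
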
 

Note that A. \v{Z}ubr generalized Wall's theorem in a different direction:  he proved a classification theorem for simply-connected, smooth spin 
manifolds with not necessarily torsion free homology \cite{Z1}, and then in \cite{Z2, Z3}  also obtained Jupp's theorem and proved that algebraic isomorphisms of systems 
of invariants can always be realized by orientation preserving diffeomorphisms.  

Observe that the first invariant $\frac{b}{2}$ is completely independent of the other invariants which implies that the following splitting theorem 
holds.
 \begin{corollary} \cite{Wa} Every simply-connected, closed, oriented, $6$-dimensional, smooth manifold $M$ 
 admits a splitting $M = M_0 \, \sharp \, \frac{b}{2} (S^3 \times S^3)$ as a connected sum of a core $M_0$ with $b= b_3(M_0) = 0$ and 
 $\frac{b}{2}$ copies of $S^3 \times S^3$. 
  \end{corollary}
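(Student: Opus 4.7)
The plan is to apply Jupp's classification theorem (the theorem stated immediately above the corollary) and exploit the fact that the admissibility condition and the equivalence of invariant systems involve only $(H, w, F, p)$, so that the integer $b/2$ is a free parameter.

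First, I would compute the invariants of $S^3\times S^3$: since $H^2(S^3\times S^3)=0$, we have $H=0$, and hence $w_2=0$, $F=0$, and $p_1=0$, while $b_3(S^3\times S^3)=2$. Thus $S^3\times S^3$ realizes the admissible system $(1,0,0,0,0)$. Next I would verify that the invariants behave additively under connected sum: for simply-connected closed oriented smooth $6$-manifolds $M$ and $N$, a Mayer--Vietoris computation applied to $M\,\sharp\,N=(M\setminus D^6)\cup_{S^5}(N\setminus D^6)$ gives $H^k(M\,\sharp\,N)\cong H^k(M)\oplus H^k(N)$ for $2\le k\le 4$, and in particular $b_3(M\,\sharp\,N)=b_3(M)+b_3(N)$; moreover, under this identification the trilinear form $F$, the class $w_2$, and the Pontrjagin class $p_1$ split as orthogonal direct sums, since classes pulled back from either summand have support disjoint from the connecting $S^5$ (so all mixed cup products vanish and characteristic classes restrict correctly to each summand).

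Given these two ingredients, I would construct the ``core'' $M_0$ as follows. Because admissibility is a condition on $(H, w, F, p)$ alone and does not refer to $b$, the system
\[
\bigl(0,\, H^2(M),\, w_2(M),\, F_M,\, p_1(M)\bigr)
\]
is admissible precisely because $(b/2, H^2(M), w_2(M), F_M, p_1(M))$ is (this latter being realized by $M$). By the existence half of Jupp's theorem, there is a closed, simply-connected, oriented, smooth $6$-manifold $M_0$, necessarily with $b_3(M_0)=0$, realizing this admissible system.

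Finally, by the additivity of invariants under connected sum established above, the manifold $M_0\,\sharp\,\tfrac{b}{2}(S^3\times S^3)$ has exactly the same admissible system of invariants as $M$. The uniqueness half of Jupp's theorem then yields an orientation-preserving diffeomorphism $M\cong M_0\,\sharp\,\tfrac{b}{2}(S^3\times S^3)$, completing the proof. The main obstacle is the additivity step: while the cohomology splitting is standard, one must check carefully that the trilinear cup product form, $w_2$, and $p_1$ all respect the direct sum decomposition (rather than acquiring cross terms), which is where simply-connectedness in dimension $6$ and the middle-dimensional cohomology structure of $S^5$ are essential.
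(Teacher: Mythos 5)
Your argument is correct and is essentially an expansion of the paper's own one-line justification, which simply observes that the invariant $\tfrac{b}{2}$ is independent of the remaining data $(H,w,F,p)$ in Jupp's classification and lets the splitting follow; you make this precise by noting that admissibility does not involve $b$, invoking the realizability half of the classification to produce the core $M_0$ with the system $(0,H^2(M),w_2(M),F_M,p_1(M))$, and checking additivity of the invariants under connected sum with copies of $S^3\times S^3$. The only caveat is that Jupp's theorem as quoted in the paper assumes torsion-free homology, so as written your argument (and the paper's) establishes the corollary under that hypothesis; for the general statement one appeals to \v{Z}ubr's extension of the classification to manifolds with torsion, which the paper cites, or to Wall's original surgery-theoretic construction of the splitting.
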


 The following corollary is an immediate consequence.
 \begin{corollary} \label{Wall}
 Let $M$ be a simply-connected, closed, oriented, $6$-dimensional, smooth manifold with 
 $$H_i(M^6)\cong H_i(S^3 \times S^3) \,\text{for all} \,\,i.$$
 Then $M^6$ is diffeomorphic to $S^3\times S^3$.
 \end{corollary}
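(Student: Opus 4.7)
The plan is to apply Wall's splitting corollary directly. Since $H_i(M^6)\cong H_i(S^3\times S^3)$, we read off $b_3(M^6)=2$, so Wall's splitting gives
\[
M^6 \;=\; M_0 \,\sharp\, (S^3\times S^3),
\]
with $M_0$ a simply-connected, closed, oriented, smooth $6$-manifold satisfying $b_3(M_0)=0$. My goal is then to identify $M_0$ with the standard $S^6$.

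First I would compute the homology of the core $M_0$. Since $M_0$ and $S^3\times S^3$ are both simply-connected, $M_0$ is simply-connected by van Kampen applied to the connected sum decomposition. For the other groups, I use the standard fact that for simply-connected closed oriented $n$-manifolds $X,Y$ and $0<k<n$ one has $H^k(X\,\sharp\, Y)\cong H^k(X)\oplus H^k(Y)$. The hypothesis forces $H^2(M^6)=0$, so $H^2(M_0)=0$, and by Poincar\'e duality (together with torsion-freeness inherited from the hypothesis and the universal coefficient theorem) also $H^4(M_0)=0$. Combined with $b_3(M_0)=0$, this shows $M_0$ is an integral homology $6$-sphere.

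Next I would upgrade this to a diffeomorphism $M_0\cong S^6$. Being a simply-connected integral homology sphere, $M_0$ is a homotopy $6$-sphere by Hurewicz and Whitehead. By the topological Poincar\'e conjecture in dimension $6$ (Smale), $M_0$ is homeomorphic to $S^6$, and by Kervaire--Milnor the group $\Theta_6$ of homotopy $6$-spheres is trivial, so $M_0$ is in fact diffeomorphic to $S^6$. Consequently
\[
M^6 \;=\; S^6 \,\sharp\,(S^3\times S^3) \;\cong\; S^3\times S^3,
\]
which is the claimed conclusion.

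The only subtle step is invoking $\Theta_6=0$; everything else is a straightforward bookkeeping of the splitting corollary and Mayer--Vietoris for the connected sum. I would expect the writeup to be short: a couple of lines to extract $b=2$ and the decomposition, a couple of lines identifying $M_0$ as a homotopy sphere, and a final line citing the triviality of $\Theta_6$.
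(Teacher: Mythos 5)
Your proof is correct and follows the route the paper intends: read off $b_3(M)=2$, invoke the Wall splitting corollary to write $M=M_0\,\sharp\,(S^3\times S^3)$, check that the homology hypothesis forces the core $M_0$ to be a homotopy $6$-sphere, and conclude via $\Theta_6=0$. A slightly more self-contained variant is to observe that $H^2(M)=0$ makes $w_2(M)$, the cubic form $F_M$, and $p_1(M)$ all vanish, so the invariant system $(1,0,0,0,0)$ coincides with that of $S^3\times S^3$ and Jupp's classification theorem gives the diffeomorphism directly, without Kervaire--Milnor.
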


\subsection{$G$-manifolds with non-negative curvature}

We now recall some general results about $G$-manifolds with non-negative curvature which we will use throughout.
Recall that fixed point homogeneous manifolds of positive curvature were classified in \cite{GS}.  More recently, 
Spindeler  \cite{Spi} proved the following theorem which characterizes non-negatively curved $G$-fixed point homogeneous manifolds.

\begin{theorem}\label{Spindeler} \cite{Spi} Assume that $G$ acts fixed point homogeneously  on a complete non-negatively curved Riemannian manifold $M$. Let $F$ be a fixed point component of maximal dimension. Then there exists a smooth submanifold $N$ of $M$, without boundary, such that $M$ is diffeomorphic to the normal disk bundles $D(F)$ and $D(N)$ of $F$ and $N$ glued together along their common boundaries;
\bdm
M  = D(F) \cup_{\partial} D(N).
\edm
Further, $N$ is $G$-invariant and contains all singularities of $M$ up to $F$.
\end{theorem}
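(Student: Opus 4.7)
The plan is to analyze the distance function $f(x) = \dist(x, F)$ and to define $N$ as its maximum set. Because $F$ is a component of $\Fix(M;G)$, it is totally geodesic, and the hypothesis $\cohomfix(M;G) = 0$ forces $\dim(M/G) = \dim(F) + 1$, so $F$ projects to a codimension-zero boundary stratum of the orbit space $B := M/G$, and $f$ descends to a well-defined Lipschitz function $\bar{f}$ on $B$. Away from $F$ and the level set where $\bar{f}$ attains its maximum, the idea is that $\bar{f}$ should have no critical points in the sense of Grove-Shiohama, after which standard critical-point theory for distance functions will yield the desired disk bundle decomposition.

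First, I would set $r := \max f$, $N := f^{-1}(r)$, and verify that $N$ is $G$-invariant (as a level set of a $G$-invariant function) and that it contains all singular orbits outside $F$. The latter is a statement about $B$: since $B$ is a non-negatively curved Alexandrov space with boundary containing $F/G$ as a codimension-zero stratum, its interior should essentially split off an interval factor over $N/G$, forcing any additional isotropy jumps to lie at the far boundary stratum $N/G$. Next, I would prove that $N$ is a smooth closed submanifold without boundary. The key input is that in non-negative curvature the farthest set from a totally geodesic submanifold behaves like a dual soul: Toponogov comparison implies $f^2$ is concave along horizontal geodesics, so its maximum is attained on a totally convex subset, which by the Cheeger-Gromoll soul construction is a smooth totally geodesic submanifold. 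Alternatively, one may invoke Perelman's stability together with the soul theorem for non-negatively curved Alexandrov spaces applied to $B$, and then lift back to $M$ using that $N$ is a union of orbits of constant type.

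Finally, to obtain the gluing $M = D(F) \cup_\partial D(N)$, I would construct a $G$-invariant gradient-like vector field for $f$ on $M \setminus (F \cup N)$, possible because $f$ has no critical points there. Its flow identifies the complement of an open tubular neighborhood of $N$ with the total space of the normal disk bundle $D(F)$, and symmetrically the complement of an open tubular neighborhood of $F$ with $D(N)$, the two being matched along their common boundary sphere bundle. The main obstacle is the middle step: establishing smoothness of $N$ and the existence of its global normal disk bundle structure, because $N$ is \emph{a priori} only a closed subset defined by distance maximization, and the required control must come from non-negativity of the curvature via comparison geometry applied in a $G$-equivariant manner.
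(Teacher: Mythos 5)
This theorem is stated in the paper as a citation from Spindeler's thesis \cite{Spi}; the paper itself contains no proof, so there is nothing to compare your argument against in the source text. Evaluated on its own terms, your outline has the right skeleton (descend to the orbit space $B=M/G$, use that $F/G$ lies in $\partial B$, exploit concavity of the distance to the boundary, invoke critical point theory for the double disk bundle), but the steps you flag as ``the main obstacle'' are in fact the entire technical content, and as written several of them are not correct.

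Concretely: defining $N:= f^{-1}(\max f)$ is too naive. The maximum level set of the concave function $\bar f=\dist(\cdot,\partial B)$ on $B$ is only a totally convex subset, which in general has nonempty boundary; one must iterate the Cheeger--Gromoll soul construction \emph{inside} that set until a boundaryless soul $S\subset B$ is reached, and then take $N$ to be the preimage of $S$ in $M$. (Your text later gestures at the soul construction, but that contradicts the earlier definition $N=f^{-1}(r)$.) Second, the concavity claim is misstated: it is $\dist(\cdot,\partial B)$ itself, not $f^2$, that is concave, and this is a fact about the non-negatively curved Alexandrov space $B$ with boundary, not about $M$; writing $f^2$ concave along horizontal geodesics in $M$ is simply false. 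Third, the assertion that $f$ has no critical points on $M\setminus(F\cup N)$, and that $N$ is a smooth $G$-invariant submanifold whose tubular neighborhood gives the second disk bundle, is precisely what must be proved and cannot be ``possible because $f$ has no critical points there.'' Establishing smoothness of $N$ is delicate because $S$ may sit inside a lower-dimensional boundary stratum of $B$; one needs slice-theorem and orbit-type arguments (or an equivariant lift of the Sharafutdinov retraction) to conclude $N$ is a closed submanifold without boundary and that the gradient-like flow between $F$ and $N$ passes only through principal orbits, which is also what forces the remaining singular orbits into $N$. These are the substance of Spindeler's proof and are missing here.
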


Let $\Isom_F(M)$ be the subgroup of the isometry group of $M$ that leaves $F$ invariant. 
The following lemmas from \cite{Spi} will also be important.

\begin{lemma}\cite{Spi}\label{equivariance} Let $M$ be a non-negatively curved fixed point homogeneous $G$-manifold, with $M$, $F$ and $N$ as in Theorem \ref{Spindeler} and $K=\Isom_F(M)$. Then there exists an $K$-equivariant diffeomorphism $b: \partial D(N)\rightarrow \partial D(F)$ and $M$ is $K$-equivariantly diffeomorphic to 
$D(F) \cup_{\partial} D(N).$
\end{lemma}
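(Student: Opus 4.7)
The plan is to show that each ingredient in Theorem \ref{Spindeler} can be chosen $K$-equivariantly, where $K=\Isom_F(M)$. The key point is that $K$ acts on $M$ by isometries leaving $F$ invariant, so the distance function $\rho(x):=\dist(x,F)$ is $K$-invariant.

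First I would argue that $N$ is $K$-invariant. In the construction of Theorem \ref{Spindeler}, $N$ is obtained from the locus where $\rho$ attains its maximum together with a soul-type deformation, so the fact that $\rho$ is $K$-invariant implies that its maximum set, and consequently $N$ itself, is preserved by $K$. Because the metric on $M$ is $K$-invariant and $F$, $N$ are $K$-invariant submanifolds, their normal bundles (and hence their normal disk bundles $D(F)$, $D(N)$) carry canonical fiberwise linear $K$-actions, realized via the exponential map as $K$-invariant tubular neighborhoods in $M$.

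Second, I would promote the gluing diffeomorphism from Theorem \ref{Spindeler} to a $K$-equivariant one. The gluing arises from the flow of the (smoothed) gradient of $\rho$ from $\partial D(N)$ to $\partial D(F)$. Since $\rho$ is $K$-invariant, one can choose a $K$-invariant smoothing $\tilde\rho$ (for example by averaging any smoothing over $K$, which is compact), whose gradient flow is then automatically $K$-equivariant. Integrating this flow for the appropriate time provides a diffeomorphism $b:\partial D(N)\to \partial D(F)$ that commutes with $K$. This identifies $M$ with $D(F)\cup_b D(N)$ via a $K$-equivariant diffeomorphism.

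The main obstacle is the technical step of verifying that Spindeler's construction of $N$ and of the gluing map, which involves smoothing a merely continuous distance function and flowing along its gradient, survives equivariantly. This is essentially a matter of averaging all auxiliary choices (smoothings, cut-off functions, possibly Riemannian connections used in local trivializations) over the compact group $K$; once this is done, every step of the original argument is $K$-natural and the conclusion follows without new geometric input.
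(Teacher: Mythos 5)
The paper states this lemma without proof, citing Spindeler's thesis \cite{Spi}, so there is no in-paper argument to compare against. Your sketch is the standard equivariance argument and captures the essential points: since $K=\Isom_F(M)$ acts by isometries preserving $F$, the distance function $\rho=\dist(\cdot,F)$ is $K$-invariant; the Cheeger--Gromoll soul construction applied to $C=M\setminus \mathring{D}(F)$ (a non-negatively curved manifold with boundary) proceeds by taking iterated maximal-distance sets from the boundary and is therefore canonically $K$-natural, so $N$ is $K$-invariant without any averaging needed at that stage; the normal exponential maps of a $K$-invariant metric identify the normal disk bundles of $F$ and $N$ with $K$-invariant tubular neighborhoods $K$-equivariantly; and the identification $b:\partial D(N)\to\partial D(F)$, realized by flowing along a gradient-like (or Sharafutdinov-type) vector field transverse to the collar region $M\setminus(\mathring{D}(F)\cup\mathring{D}(N))$, can be made $K$-equivariant by averaging the auxiliary smoothing and the vector field over the compact group $K$. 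The only imprecision is the phrase ``soul-type deformation'' — the construction of $N$ from the max locus of $\rho$ involves no genuine choices and so is automatically equivariant, whereas averaging is only needed for the smooth gluing map — but this does not affect the correctness of the strategy.
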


\begin{lemma}\cite{Spi}\label{codim} Let $M$ and $N$ be as in Theorem \ref{Spindeler} and assume that $\pi_1(M)=0$ and $G$ is connected. Then $N$ has codimension  greater than or equal to $2$ in $M$.
\end{lemma}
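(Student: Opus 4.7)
The plan is to argue by contradiction: assume $\codim N = 1$ in $M$ and construct a non-trivial connected double cover $\hat{M} \to M$, contradicting $\pi_1(M) = 0$. First I would observe that $\codim F \geq 2$. Since $F$ is a component of the $G$-fixed point set, the slice representation of $G$ on the normal fiber $\R^{\codim F}$ has no non-zero fixed vectors, and a connected Lie group cannot act non-trivially on $\R$ because its image in $O(1) = \zzz/2$ must be trivial; this forces $\codim F \geq 2$. Consequently $\partial D(F) \to F$ is a sphere bundle with connected fiber $S^{\codim F - 1}$ over connected $F$, so $\partial D(F)$ is connected.

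Next, under the working assumption $\codim N = 1$, the normal disk bundle $D(N) \to N$ is an $I$-bundle and $\partial D(N) \to N$ is an $S^0$-bundle, i.e. a $2$-fold covering space of $N$. Because $\partial D(N) = \partial D(F)$ is connected, this $2$-fold cover must be non-trivial, equivalently the normal line bundle $\nu_N$ is non-orientable. Writing $\tilde{N} := \partial D(N) \to N$ for this connected double cover, the pulled-back $I$-bundle $\tilde{D}(N) := \tilde{N} \times [-1, 1]$ is a trivial cylinder that $2$-covers $D(N)$, with boundary $\tilde{N} \sqcup \tilde{N}$.

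Then I would build $\hat{M}$ by gluing. Over $D(F)$ take the trivial double cover $D(F) \sqcup D(F)$, whose boundary is $\tilde{N} \sqcup \tilde{N}$. Over $D(N)$ take $\tilde{D}(N)$, whose boundary is also $\tilde{N} \sqcup \tilde{N}$; the key compatibility is that pulling back the cover $\tilde{N} \to N$ along the covering map $\tilde{N} \to N$ itself always splits as the trivial cover $\tilde{N} \sqcup \tilde{N}$. Hence the two restrictions to the common boundary agree, and glue to a $2$-fold cover $\hat{M} \to M$. The middle cylinder $\tilde{N} \times [-1, 1]$ is connected and its two boundary components attach to the two distinct copies of $D(F)$, so tracing a path across the cylinder connects the two sheets over $D(F)$; thus $\hat{M}$ is connected, hence a non-trivial double cover of $M$. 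This contradicts $\pi_1(M) = 0$, so $\codim N \geq 2$.

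The main obstacle will be the careful bookkeeping of the gluing data to ensure that $\hat{M}$ is well-defined and connected, which reduces to the elementary fact that a $2$-fold cover pulled back along itself is trivial. A cleaner alternative avoiding this gluing is Mayer-Vietoris with $\zzz/2$ coefficients: the image of $H_1(\tilde{N}; \zzz/2) \to H_1(N; \zzz/2)$ equals $\ker(w_1(\nu_N))$, a proper codimension-one subspace since $w_1(\nu_N) \neq 0$, so the cokernel of $H_1(\tilde{N}; \zzz/2) \to H_1(F; \zzz/2) \oplus H_1(N; \zzz/2)$ surjects onto $\zzz/2$, and this nontrivial quotient injects into $H_1(M; \zzz/2)$, again contradicting simple-connectedness.
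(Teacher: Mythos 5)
Your argument is correct. The paper does not prove this lemma itself --- it is cited directly from Spindeler's thesis \cite{Spi} --- so there is no in-text proof to compare against, but the line of reasoning you give is the natural one and closes without gaps: $\codim F \ge 2$ because a connected $G$ has connected, hence trivial, image in $O(1)$, so $E := \partial D(F) = \partial D(N)$ is connected; a codimension-one $N$ would then make $E \to N$ a connected, hence nontrivial, $S^0$-bundle; and gluing the trivial double cover $D(F) \sqcup D(F)$ to the pulled-back trivial interval bundle $\tilde N \times [-1,1]$ over $D(N)$ yields a connected double cover of $M$, contradicting $\pi_1(M)=0$. The gluing compatibility you flag is handled exactly as you say: a double cover pulled back along itself is canonically trivial, matching the trivial cover over $D(F)$ along $E$, and the cylinder connects the two sheets. (One should also note that connectedness of $E$ forces $N$ itself to be connected, which your construction implicitly uses.) The Mayer--Vietoris variant with $\zzz/2$ coefficients at the end is also sound, resting on the standard fact that the image of $H_1(\tilde N;\zzz/2) \to H_1(N;\zzz/2)$ is $\ker w_1(\nu_N)$; the double-cover construction is the cleaner of the two and is what one finds in the literature on fixed point homogeneous manifolds.
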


The next theorem from Galaz-Garc\'ia and Spindeler \cite{GGSp} covers the special case when both $F$ and $N$ are fixed point sets of the $G$-action and generalizes the Double Soul Theorem  for $S^1$-fixed point homogeneous actions of \cite{SY}.
\begin{DSthm} \cite{SY}\cite{GGSp}\label{DST}
Let $M$ be a non-negatively curved $G$-fixed point homogeneous Riemannian manifold, where the principal isotropy group of the $G$ action is $H$. If $\Fix(M, G)$ contains at least two connected components $F$  and $N$ with maximal dimension, one of which is compact, then $F$ and $N$ are isometric and $M$ is diffeomorphic to an $S^{k+1}$-bundle over $F$, where $S^k=G/H$.
\end{DSthm}

 Since fixed point homogeneous manifolds with either positive or non-negative lower curvature bounds decompose as unions of disk bundles, the following purely topological lemma from \cite{ES} will 
 be useful. 
\begin{lemma}\label{l:pi1} \cite{ES} Let $M$ be a  manifold with $\rk(H_1(M))=k$, $k\in\zzz^+$. If $M$ admits a disk bundle decomposition 
\bdm
M=D(N_1)\cup_{E} D(N_2), 
\edm
where $N_1$, $N_2$ are smooth submanifolds of $M$ and $N_1$ is orientable and of codimension two, then both 
$\rk(H_1(N_1))$ and $\rk(H_1(N_2))$ are less than or equal to $k+1$.
\end{lemma}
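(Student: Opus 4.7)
The natural plan is to apply the Mayer--Vietoris sequence to the decomposition $M = D(N_1) \cup_E D(N_2)$ and to bound $\rk(H_1(E))$ from above in two complementary ways using the two sphere-bundle structures on $E$. Since each $D(N_i)$ deformation retracts to its zero section $N_i$, the relevant portion of the sequence reads
\[
H_1(E) \xrightarrow{\phi} H_1(N_1) \oplus H_1(N_2) \to H_1(M) \to \tilde H_0(E) \to \tilde H_0(N_1) \oplus \tilde H_0(N_2).
\]
Under the (harmless) assumption that $E$, $N_1$, $N_2$ and $M$ are connected, the last map is injective, so $H_1(M) \cong \operatorname{coker}(\phi)$. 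Writing $A = \rk(H_1(N_1))$ and $B = \rk(H_1(N_2))$, this gives
\[
\rk(\operatorname{image} \phi) = A + B - k.
\]

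Next I bound $\rk(H_1(E))$. Because $N_1$ is orientable of codimension two, $E = \partial D(N_1)$ is the total space of an oriented circle bundle over $N_1$, and the inclusion $E \hookrightarrow D(N_1) \simeq N_1$ coincides with this bundle projection. Abelianizing the tail of the homotopy exact sequence $\pi_1(S^1) \to \pi_1(E) \to \pi_1(N_1) \to 1$ of the fibration yields an exact sequence $H_1(S^1) \to H_1(E) \to H_1(N_1) \to 0$. Since $H_1(S^1) \cong \zzz$, the map $H_1(E) \to H_1(N_1)$ is surjective with cyclic kernel, and hence $\rk(H_1(E)) \leq A + 1$. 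The identical argument applied to the sphere bundle $E \to N_2$, whose fiber $S^{c-1}$ (with $c := \codim(N_2) \geq 2$) is connected and has $\rk H_1 \leq 1$, gives $\rk(H_1(E)) \leq B + 1$.

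Combining $\rk(\operatorname{image}\phi) \leq \rk(H_1(E))$ with these two bounds produces $A + B - k \leq A + 1$ and $A + B - k \leq B + 1$, which rearrange to $B \leq k + 1$ and $A \leq k + 1$ respectively. The only real subtlety is the standing connectedness assumption; if some of $E$, $N_1$, $N_2$ is disconnected, one runs the same argument componentwise, and the additional contributions from the $\tilde H_0$ terms only relax (never tighten) the estimates.
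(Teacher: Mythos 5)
The paper does not reproduce a proof of this lemma; it simply cites \cite{ES}. Your argument is exactly the natural one and almost certainly coincides with the intended proof: Mayer--Vietoris for the decomposition gives $\rk(\operatorname{image}\phi)=A+B-k$ and $\rk(\operatorname{image}\phi)\le\rk(H_1(E))$, while the two sphere-bundle projections of $E$ onto $N_1$ and $N_2$, via the homotopy exact sequence and right-exactness of abelianization, give $\rk(H_1(E))\le A+1$ and $\rk(H_1(E))\le B+1$; combining yields both inequalities. The steps are correct.

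Two small things are worth flagging. First, you assert ``$c:=\codim(N_2)\ge 2$'' as if it were given, but the lemma's hypotheses only specify that $N_1$ has codimension two; nothing is stated about $N_2$. For $\codim(N_2)=1$ the fiber of $E\to N_2$ is $S^0$, which is disconnected, and the step ``$H_1(E)\to H_1(N_2)$ surjective with kernel of rank at most $1$'' breaks down (a degree-two covering $E\to N_2$ can have $\rk H_1(E)$ nearly twice $\rk H_1(N_2)$, as for surfaces). You should either restrict to $\codim(N_2)\ge 2$ explicitly or observe that, in the setting in which the lemma is actually invoked in this paper, that inequality always holds because of Spindeler's Lemma~\ref{codim} on the codimension of $N$; as stated, though, your proof silently strengthens the hypotheses. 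Second, the parenthetical ``oriented circle bundle'' is not justified by $N_1$ being orientable (that would require orientability of its normal bundle), but this is harmless, since the homotopy exact sequence of the $S^1$-fibration does not require an orientation.
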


An important subclass of manifolds admitting an effective torus action are the so-called {\it torus manifolds}. 

\begin{definition}[{\bf Torus Manifold}] A {\em torus manifold} $M$ is a $2n$-dimensional closed, connected, orientable, smooth manifold with an effective smooth action of 
an $n$-dimensional torus $T$ such that $M^T\neq \emptyset$.
\end{definition}
A related concept is that of an  {\em isotropy-maximal} $T^k$-action on $M^n$, when there exists a point in $M$ whose isotropy group is maximal, namely of dimension $n-k$ (see Ishida \cite{I}, see also \cite{ES}).
Note that a torus manifold, $M^{2n}$, is an example of a manifold admitting an isotropy-maximal $T^n$-action. In fact, we may characterize torus manifolds as $2n$-dimensional closed, connected, orientable, smooth manifolds with an effective and isotropy-maximal smooth $T^n$-action.

The following important theorem from \cite{Wie} gives a classification up to equivariant diffeomorphism of non-negatively curved torus manifolds.

\begin{theorem}\label{Wiemeler}   \cite{Wie} Let $M^{2n}$ be a simply-connected, non-negatively curved Riemannian manifold admitting an isometric, effective, and isotropy-maximal $T^n$-action. Then $M$ is equivariantly diffeomorphic to 
a quotient of a free linear torus action of 
\bdm
\mathcal{Z}=\prod_{i<r} S^{2n_i} \times \prod_{i\geq r} S^{2n_i-1},\, \, n_i\geq 2.
\edm
\end{theorem}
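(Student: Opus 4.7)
The plan is to prove Theorem~\ref{Wiemeler} by induction on $n$, using the decomposition of non-negatively curved $G$-fixed point homogeneous manifolds as a union of two disk bundles, together with the fact that an isotropy-maximal $T^n$-action on $M^{2n}$ is precisely a torus manifold structure (the isotropy-maximal condition forces a $T^n$-fixed point). The base cases $n=1$ (where $M^2=S^2$) and $n=2$ (where simply-connected torus manifolds in dimension $4$ are classified and the non-negatively curved ones give $S^4$ or $\ccc P^2$, up to free quotients of products of spheres) can be handled directly.

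For the inductive step, first I would select a circle subgroup $S^1 \subset T^n$ whose fixed point set $\Fix(M^{2n}; S^1)$ has a component $F$ of codimension $2$. Such an $F$ always exists for a torus manifold: working at a $T^n$-fixed point $p$, the isotropy representation on $T_pM^{2n}$ decomposes as a sum of $n$ nontrivial real $2$-planes acted on by distinct characters of $T^n$, and any character gives a codimension-$2$ circle fixed component through $p$. Since $F$ has codimension $2$, the action of $S^1$ is fixed point homogeneous, so Theorem~\ref{Spindeler} (together with Lemma~\ref{equivariance} and Lemma~\ref{codim}) produces a $T^n$-equivariant decomposition
\begin{equation*}
M^{2n} = D(F) \cup_\partial D(N),
\end{equation*}
where $N$ is a $T^n$-invariant smooth submanifold of codimension at least $2$ and contains all other $S^1$-singularities.

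Next I would analyze the two pieces. The quotient torus $T^n/S^1 \cong T^{n-1}$ acts on $F^{2n-2}$ effectively with a fixed point (inherited from a $T^n$-fixed point of $M$), so $F$ is again a closed, simply-connected, non-negatively curved torus manifold of one lower dimension; here simple connectedness of $F$ would follow from the codimension-$2$ orientable splitting and a van Kampen / Lemma~\ref{l:pi1}-style argument. The inductive hypothesis identifies $F$ equivariantly with a quotient of a free linear torus action on some product $\prod_{i<r'} S^{2n_i}\times \prod_{i\geq r'} S^{2n_i-1}$. For the core $N$, one considers the two cases supplied by the Double Soul Theorem: either $N$ is also a $T^n$-fixed component of maximal dimension (in which case $F\cong N$ and $M$ is an $S^{k+1}$-bundle over $F$, yielding an extra odd-sphere factor in $\mathcal{Z}$), or $N$ is lower-dimensional with a transitive residual torus action on a sphere factor of its normal bundle, in which case $N$ itself inherits an isotropy-maximal lower-dimensional torus action and the induction applies to $N$ as well.

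The final step is to identify the gluing $D(F)\cup_\partial D(N)$ equivariantly with the standard model. The normal bundle of $F$ is a $T^n$-equivariant complex line bundle whose Euler class is determined by the weight of $S^1$ on $\nu(F)$; comparing this with the standard disk-bundle description of the model manifold $\mathcal{Z}/T$ shows that the two bundles are equivariantly isomorphic. The gluing diffeomorphism $b:\partial D(N)\to\partial D(F)$ from Lemma~\ref{equivariance} is $T^n$-equivariant, and a standard argument using the fact that the space of $T^n$-equivariant self-diffeomorphisms of a principal orbit fiber is connected allows one to isotope $b$ to the standard gluing map. The hard part of this program is precisely this last identification: showing that the inductively built pieces $F$, $N$ and the normal bundle data of $F$ in $M$ fit together in a unique equivariant way, and in particular verifying that the freeness and linearity of the torus action on $\mathcal{Z}$ is preserved under the induction. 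This is where the full strength of simple-connectedness and non-negative curvature is needed, to rule out twisted bundle structures and to force $N$ to again be a torus manifold (or a fixed point set), rather than a more general $T^n$-space.
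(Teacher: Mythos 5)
You are trying to prove a statement that this paper does not prove: Theorem~\ref{Wiemeler} is quoted from \cite{Wie} and used as a black box, so there is no internal proof to compare against. Your outline does echo the starting point of Wiemeler's actual argument (a codimension-two fixed component $F$ of a circle subgroup through a $T^n$-fixed point, Spindeler's decomposition $M=D(F)\cup_{\partial}D(N)$ from Theorem~\ref{Spindeler}, and induction on dimension), but as a proof it has genuine gaps exactly at the points you flag as ``the hard part,'' and those points are not routine.

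Concretely: (a) the simple connectivity of $F$, needed to run your induction, is asserted rather than proved; Lemma~\ref{l:pi1} only bounds the rank of $H_1$, and in this very paper codimension-two fixed components of circle actions have $\pi_1\cong\zzz$ (Proposition~\ref{p:fix}), so some argument exploiting the $T^n$-fixed point is required, not just a van Kampen remark. (b) Your dichotomy for $N$ via the Double Soul Theorem~\ref{DST} is not exhaustive: $N$ is merely a $T^n$-invariant submanifold containing the remaining singularities; it need not be a fixed point component of maximal dimension, need not be a torus manifold, and need not inherit an isotropy-maximal action, so ``the induction applies to $N$ as well'' is unjustified. (c) The final identification is where the proposal would fail as written: an equivariant complex line bundle over $F$ is not determined by its $S^1$-weight alone (the underlying Euler class must be controlled), and connectedness of a group of equivariant diffeomorphisms of the boundary is not sufficient to pin down the glued manifold up to equivariant diffeomorphism --- equivariant exotic smooth structures are a genuine issue here. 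Wiemeler's proof does not perform such a direct gluing induction; instead, the disk bundle decompositions are used to show that $M$ is locally standard with $H^{odd}(M;\zzz)=0$ and that the orbit space is a nice manifold with corners with acyclic faces, and the identification with a quotient of a free linear torus action on a product of spheres then comes from rigidity and classification theorems in toric topology (Masuda--Panov type results together with Wiemeler's results on equivariant smooth structures). Without supplying (a)--(c), or replacing them by that toric-topological machinery, the proposed induction does not close.
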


We now recall Theorem A from \cite{ES}, which generalizes Theorem \ref{Wiemeler} and is used in the proof of the Main Theorem.

\begin{theorem}\label{ES}\cite{ES}
 Let $M^n$, a closed, simply-connected, non-negatively curved Riemannian manifold admitting an isometric, effective and isotropy-maximal $T^k$-action, where $k\geq \lfloor (n+1)/2\rfloor$. Then $M$ is equivariantly diffeomorphic to the free linear quotient of $\mathcal{Z}$, 
$$\mathcal{Z}=\prod_{i<r} S^{2n_i} \times \prod_{i\geq r} S^{2n_i-1},\, \, n_i\geq 2,$$ a product of spheres of dimensions greater than or equal to $3$ and with $n\leq \dim(\mathcal{Z})\leq 3n-3k$.  
\end{theorem}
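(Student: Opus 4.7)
The plan is to prove Theorem \ref{ES} by induction on the defect $d := 2k - n \geq 0$, reducing by a well-chosen circle at each step and using Wiemeler's Theorem \ref{Wiemeler} as the base case.

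\textbf{Base case.} When $d = 0$, i.e., $n = 2k$, the isotropy-maximal hypothesis produces a point whose isotropy has dimension $n - k = k$, hence equals all of $T^k$. Then $M$ is a torus manifold and Theorem \ref{Wiemeler} gives the conclusion directly, with $\mathcal{Z}$ a product of spheres of dimensions $\geq 3$ and $\dim \mathcal{Z} \leq 2n = 3n - 3k$.

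\textbf{Inductive step.} When $d \geq 1$, I would first produce a subcircle $S^1 \subset T^k$ acting freely on $M$. The identity component of each isotropy is a subtorus of dimension at most $n - k < k$, and by the slice theorem and compactness only finitely many such subtori appear, so a generic circle intersects each trivially; a further generic adjustment handles the finite isotropies of any exceptional orbits. Then $M \to M/S^1$ is a principal $S^1$-fibration, and the quotient is closed, non-negatively curved (by O'Neill), simply-connected (long exact sequence of the fibration), of dimension $n-1$, with an effective isometric $T^{k-1} = T^k/S^1$-action. At the image of a maximal-isotropy point the residual isotropy has dimension $n-k = (n-1)-(k-1)$, so the action is again isotropy-maximal; from $k \geq \lfloor (n+1)/2 \rfloor$ and $d \geq 1$ one checks that $k-1 \geq \lfloor n/2 \rfloor$, so the inductive hypothesis applies and yields $M/S^1 \cong_{T^{k-1}} \mathcal{Z}'/T^{\ell'}$ for a product $\mathcal{Z}'$ of spheres of dimensions $\geq 3$.

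\textbf{Reassembly.} The principal $S^1$-bundle $M \to M/S^1$ pulls back along the free quotient $\mathcal{Z}' \to \mathcal{Z}'/T^{\ell'}$ to a principal $S^1$-bundle $\tilde M \to \mathcal{Z}'$; since every sphere factor has dimension $\geq 3$, $H^2(\mathcal{Z}';\zzz) = 0$ and the bundle is topologically trivial. However, the naive choice $\tilde M \cong \mathcal{Z}' \times S^1$ is forbidden by the statement, as it would introduce a $1$-dimensional sphere factor, so the reassembly must use the $T^{\ell'+1}$-equivariant structure: the new circle is absorbed into the enlarged free torus action by enlarging an existing odd-sphere factor $S^{2m-1}$ of $\mathcal{Z}'$ to $S^{2m+1}$ (or introducing a new such factor of dimension $\geq 3$), producing the required $\mathcal{Z}$. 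Tracking the number of odd-sphere factors through the induction maintains the bounds $n \leq \dim \mathcal{Z} \leq 3n - 3k$.

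\textbf{Main obstacle.} The hard step is the equivariant reassembly, not the topological triviality: $H^2(\mathcal{Z}';\zzz) = 0$ is not enough to pin down $\mathcal{Z}$, and one must use the torus-equivariant structure to identify it. The cleanest route is the moment-angle manifold formalism, where $\mathcal{Z}$ arises intrinsically from the combinatorics of the orbit space $M/T^k$ (a manifold with corners) together with a characteristic function, with non-negative curvature forcing the resulting space to be a product of spheres of dimension $\geq 3$, extending Wiemeler's argument. As an alternative when a free subcircle is unavailable or inconvenient, one can find a subcircle acting fixed-point-homogeneously, apply Spindeler's decomposition (Theorem \ref{Spindeler}) to write $M = D(F) \cup D(N)$, recurse with the inductive hypothesis on the $T^{k-1}$-spaces $F$ and $N$, and glue using the Double Soul Theorem \ref{DST} together with Lemma \ref{equivariance}.
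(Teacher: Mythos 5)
Theorem \ref{ES} is not proved in this paper at all: it is quoted verbatim from the reference \cite{ES} and used as a black box, so there is no in-paper proof to compare against. Your proposal is therefore a genuinely independent attempt, and it should be judged on its own terms.

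Judged that way, the free-circle induction has two concrete problems. First, the base case has an arithmetic error: when $d = 0$ we have $n = 2k$, so $3n - 3k = 3k$, not $2n = 4k$; your claim that Wiemeler's theorem yields $\dim \mathcal Z \le 2n = 3n - 3k$ equates two quantities that differ by $k$. One actually has to \emph{derive} $\dim \mathcal Z \le 3k$ from the structure of the free linear torus action on $\mathcal Z$ (each odd sphere factor contributes at most one to the freely acting rank, and that bounds $\dim \mathcal Z - n$), and your write-up skips this entirely. Second, and more seriously, the ``reassembly'' step is where the whole proof should live and is precisely where you admit you do not have an argument. Topological triviality of the pulled-back circle bundle (from $H^2(\mathcal Z';\zzz)=0$) only gives $\tilde M \cong \mathcal Z' \times S^1$ non-equivariantly, which introduces a forbidden $S^1$ factor; the proposed ``absorption'' into an odd-sphere factor $S^{2m-1}\mapsto S^{2m+1}$ increases $\dim \mathcal Z'$ by $2$, while the inductive bound $\dim\mathcal Z' \le 3(n-1)-3(k-1) = 3n-3k$ leaves no slack, so the bookkeeping as described overshoots the required bound by $2$ at every step. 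No mechanism is given by which the equivariant structure of the lift forces the correct sphere factor to appear, nor is it shown that a free circle subgroup exists in all cases (the ``generic circle'' argument needs to handle discrete components of isotropy and the density issue for rational directions, neither of which is addressed). Your closing paragraph correctly identifies the two approaches that actually carry the argument in the literature — the moment-angle/orbit-space-polytope formalism behind Wiemeler's theorem, or a disk-bundle recursion via fixed-point-homogeneous circles and Spindeler's decomposition — but you only gesture at them, and the latter is in fact closer in spirit to what \cite{ES} does. As written, neither the free-circle route nor the alternatives is carried far enough to constitute a proof.
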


\section{Disk Bundle Decompositions}\label{3}
In this section we present two general topological results about manifolds which decompose as unions of disk bundles. Note that these results are curvature independent.

The first theorem allows us to identify the fundamental group of $E$ in the disk bundle decomposition.
 
 \begin{theorem}\label{VanKampen} Let $M^n$ be a simply-connected manifold that decomposes as the union of two disk bundles as follows:
$$M^n=D^k(N_1)\cup_E D^l(N_2).$$
Then the following hold:
\begin{enumerate}
\item  If $k=l=2$ and 
$\pi_2(N_i)=0$ for $i=1, 2$ and $\pi_ 1(N_1)$ is not a finite cyclic group, then  $\pi_1(E) \cong \zzz^2$.
\item If $k\geq 3$, then $\pi_1(E) \cong \pi_1(N_1)$.
\end{enumerate}
\end{theorem}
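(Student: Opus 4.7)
The natural tool here is the Seifert--van Kampen theorem applied to the decomposition $M^n = D^k(N_1) \cup_E D^l(N_2)$, combined with the long exact sequence of homotopy groups of the two sphere bundles $E = \partial D^k(N_1) \to N_1$ and $E = \partial D^l(N_2) \to N_2$, with fibers $S^{k-1}$ and $S^{l-1}$ respectively.

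\emph{Part (2).} When $k \geq 3$, the fiber $S^{k-1}$ of $E \to N_1$ is simply-connected. The relevant segment
$$\pi_1(S^{k-1}) \longrightarrow \pi_1(E) \longrightarrow \pi_1(N_1) \longrightarrow \pi_0(S^{k-1})$$
of the long exact sequence has trivial first and last terms, which immediately yields $\pi_1(E) \cong \pi_1(N_1)$. (As a by-product, combined with Seifert--van Kampen and $\pi_1(M) = 1$, one also gets $\pi_1(N_2) = 1$.)

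\emph{Part (1).} When $k = l = 2$, both bundles have circle fiber. Combining the long exact sequence of each bundle with the hypothesis $\pi_2(N_i) = 0$ yields, for $i = 1, 2$, short exact sequences
$$0 \longrightarrow \mathbb{Z}\langle t_i \rangle \longrightarrow \pi_1(E) \longrightarrow \pi_1(N_i) \longrightarrow 0,$$
where $t_i \in \pi_1(E)$ is the class of the circle fiber of $E \to N_i$. I would then thicken the two disk bundles to open sets whose intersection deformation retracts onto $E$ and which deformation retract onto $N_1$ and $N_2$ respectively; the Seifert--van Kampen theorem then gives
$$1 \; = \; \pi_1(M) \; = \; \pi_1(N_1) *_{\pi_1(E)} \pi_1(N_2).$$
Since the two boundary maps $\pi_1(E) \to \pi_1(N_i)$ are surjective with cyclic kernels $\langle t_i\rangle \cong \mathbb{Z}$, the triviality of the pushout is equivalent to $\pi_1(N_1)$ being the normal closure of the image of $t_2$, and symmetrically for $\pi_1(N_2)$.

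Combining the normal-generation condition with the extension $0 \to \mathbb{Z} \to \pi_1(E) \to \pi_1(N_1) \to 0$ should force $\pi_1(N_1)$ to be cyclic; the hypothesis that $\pi_1(N_1)$ is \emph{not} finite cyclic then pins down $\pi_1(N_1) \cong \mathbb{Z}$. Hence $\pi_1(E)$ fits in $0 \to \mathbb{Z} \to \pi_1(E) \to \mathbb{Z} \to 0$, whose only solutions up to isomorphism are $\mathbb{Z}^2$ and the Klein bottle group $\langle a,b \mid aba^{-1}b\rangle$. The main obstacle I expect is ruling out the Klein bottle case; this amounts to showing the extension is central, i.e. that $t_1$ commutes with all of $\pi_1(E)$. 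This centrality should follow from orientability of the normal rank-$2$ bundle of $N_1$ in the oriented manifold $M$, since an oriented circle bundle has central fiber class in the fundamental group of its total space. With centrality in hand, the Klein bottle extension is excluded and $\pi_1(E) \cong \mathbb{Z}^2$.
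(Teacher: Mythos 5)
Your Part (2) is identical to the paper's. For Part (1), your setup coincides with the paper's through the two short exact sequences
\begin{equation*}
0 \longrightarrow \mathbb{Z}\langle t_i\rangle = U_i \longrightarrow \pi_1(E) \longrightarrow \pi_1(N_i) \longrightarrow 0
\end{equation*}
and the Seifert--van Kampen conclusion $\pi_1(E) = U_1 U_2$ with both $U_i$ normal. But your endgame has a real gap. You reduce to classifying the extension $0 \to \mathbb{Z} \to \pi_1(E) \to \mathbb{Z} \to 0$ and propose to exclude the Klein bottle group by noting that an \emph{oriented} circle bundle has central fiber class, invoking orientability of the normal bundle of $N_1$. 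That orientability is not among the hypotheses of the theorem and does not follow from them: $M$ is orientable because it is simply-connected, but $TM|_{N_1} \cong TN_1 \oplus \nu$ gives $w_1(\nu) = w_1(TN_1)$, so the normal bundle $\nu$ is orientable if and only if $N_1$ is, and nothing prevents $N_1$ from being non-orientable here. (In the paper's application $N_1$ happens to be a fixed-point set of a circle action on an orientable manifold and hence orientable, but Theorem 3.1 itself is stated and used purely topologically, with no such hypothesis.) Along the way you also leave ``should force $\pi_1(N_1)$ to be cyclic'' unjustified; the missing observation is that the image of $t_2$ in $\pi_1(E)/U_1$ is already a \emph{normal} cyclic subgroup because $U_2 \triangleleft \pi_1(E)$, so its normal closure is just the cyclic subgroup it generates.

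The paper sidesteps the entire extension-classification and orientability discussion by a short group-theoretic dichotomy that you should adopt: since $U_1, U_2 \triangleleft \pi_1(E)$ and $U_1U_2 = \pi_1(E)$, either $U_1 \cap U_2 = \{1\}$, in which case $[U_1,U_2] \subseteq U_1 \cap U_2 = \{1\}$, so $U_1$ and $U_2$ commute and $\pi_1(E) \cong U_1 \times U_2 \cong \mathbb{Z}^2$; or $U_1 \cap U_2 = n\mathbb{Z} \neq \{1\}$, in which case $\pi_1(N_1) \cong U_1U_2/U_1 \cong U_2/(U_1\cap U_2) \cong \mathbb{Z}/n\mathbb{Z}$ is finite cyclic, contradicting the hypothesis. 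This uses nothing but the normality of $U_1$ and $U_2$ and the non-finite-cyclicity of $\pi_1(N_1)$, and in particular is independent of any orientability assumption. Your route can be repaired, but it requires either adding an orientability hypothesis (which would weaken the theorem) or a more delicate argument showing the Klein bottle group admits no pair of normal infinite cyclic subgroups whose product is the whole group; the paper's two-line dichotomy is both cleaner and strictly more general.
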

\begin{proof}
Case (1):  
Assume that $k=l=2$.  Then $E$ is a circle bundle over $N_1$ and also over $N_2$, where $\pi_2(N_1)=\pi_2(N_2)=0$ .  Hence we obtain the following short exact sequences from the long exact sequences in homotopy:

 $$0 \longrightarrow \pi_1(S^1_j) \overset{i_j*} \longrightarrow  \pi_1(E) \overset{f_j^*}  \longrightarrow  \pi_1(N_j)  \longrightarrow 0\,, \textrm{ for } j\in \{1, 2\}$$

  \vspace{0.2cm}
  
 Now let $U_1 = i_1^*(\pi_1(S^1_1))$ and  $U_2 = i_2^*(\pi_1(S^1_2))$.  Then $\pi_1(N_1) = \pi_1(E)/U_1$ and $\pi_1(N_2) = \pi_1(E)/U_2$,
 so we get the following commutative diagram:
 
   \begin{equation*} \tag{$\ast$}
\begin{split}
\xymatrix{
&  \pi_1(E)    \ar[r]^{f_2^*}   \ar[d]^{f_1^*}  &  \pi_1(N_2)   \ar[d]^{}    &    \\
 &  \pi_1(N_1)  \ \ar[r]^{}  & \pi_1(E)/U_1 U_2  }
\end{split}
\end{equation*}

Here the lower map is given by 
$$\pi_1(N_1) \cong \pi_1(E)/U_1 \longrightarrow \pi_1(E)/U_1 U_2 \,,$$
and the same is true for $\pi_1(N_2)$.  Now by Seifert-van Kampen (universal property), there exists a morphism $h: \pi_1(M) \longrightarrow \pi_1(E)/U_1 U_2 $
making the following diagram commute:  
 
   \begin{equation*}
\begin{split}
\xymatrix{
&  \pi_1(E)    \ar[r]^{f_2^*}   \ar[d]^{f_1^*}  &  \pi_1(N_2)   \ar@/^2pc/[ddr]_{}  \  \ar[d]^{}    &  {} &  \\
 &  \pi_1(N_1)   \ar@/_2pc/[drr]_{} \ \ar[r]^{}  &    \pi_1(M)    \ar[rd]^{h}   & {} \\
 &  {}   &  {} & \pi_1(E)/U_1 U_2 }
\end{split}
\end{equation*}
 
 Since all the maps in ($\ast$) are surjective, $h$ must be surjective.  But since $\pi_1(M) = 0$, this implies that $\pi_1(E) \cong U_1 U_2 $.  
 Note that both $U_1$ and $U_2$ are normal in $\pi_1(E)$.  If in addition $U_1 \cap U_2 = \{1\}$, then $\pi_1(E) \cong U_1 \times U_2 
 \cong \zzz^2$ and the theorem follows.  If $U_1 \cap \, U_2  \ne \{1\}$, then $\pi_1(N_1) \cong U_1U_2/U_1 \cong U_2/{U_1 \cap U_2}$.
 But $U_1 \cap U_2$ is a normal subgroup of $U_2 \cong \zzz$, hence $U_1 \cap U_2 \cong n \, \zzz$ for some $n \in \zzz$.  It follows 
 that $\pi_1(N_1) \cong U_2/{U_1 \cap U_2} \cong \zzz/n \zzz $ which is a contradiction to the hypothesis that $\pi_1(N_1)$ is not finite cyclic.
 Hence $U_1 \cap U_2 = \{1\}$ and  $\pi_1(E) \cong U_1 \times U_2  \cong \zzz^2$.
 
 Case (2):  Assume now that $k \ge 3$.   Then $E$ is a $S^{k-1}$ bundle over $N_1$ and hence by the long exact sequence in homotopy 
 $\pi_1(E) \cong \pi_1(N_1)$.   
 
 \end{proof}

For manifolds of dimension greater than or equal to $3$ that decompose as a union of disk bundles, the following general theorem allows us to impose strong restrictions on the fundamental groups of the bases of these bundles.

\begin{theorem}\label{cyclic} Let $M^n$ be a closed, simply-connected manifold admitting a smooth $S^1$-fixed point homogeneous action, with $n\geq 3$. Let $F^{n-2}$ be the unique fixed point component of $S^1$ of codimension two. Suppose further that $M$ decomposes as a union of disk bundles over $F$ and over $N$, that is, 
$$M=D(F)\cup_{E} D(N),$$  where $N$ is a codimension two submanifold invariant under the $S^1$-action. Suppose further that all singularities of the $S^1$-action outside of $F$ are contained in $N$.
Then $\pi_1(F)$ and $\pi_1(N)$ are cyclic groups.
\end{theorem}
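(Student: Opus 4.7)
The plan is to apply Seifert--van Kampen to the given disk bundle decomposition and then to use a torus centrality argument for orbit classes of commuting circle actions.

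First, I observe that since all non-principal orbits outside $F$ lie in $N$ and $E=\partial D(F)=\partial D(N)$ is disjoint from both zero sections, the $S^1$-action is free on $E$. Consequently $E\to F$ is a principal $S^1$-bundle, with fibers exactly the $S^1$-orbits, and $E\to N$ is the unit normal sphere bundle, again a circle bundle. The long exact sequences in homotopy give surjections $\pi_1(E)\twoheadrightarrow \pi_1(F)$ and $\pi_1(E)\twoheadrightarrow \pi_1(N)$, each with cyclic kernel generated by the image of $\pi_1(S^1)=\zzz$.

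Next, since $\pi_1(M)=1$, Seifert--van Kampen on $M=D(F)\cup_E D(N)$ gives $\pi_1(F)*_{\pi_1(E)}\pi_1(N)=1$. When one of the amalgamating maps, say $f\colon\pi_1(E)\to\pi_1(F)$, is surjective, the pushout simplifies to $\pi_1(N)/\langle\langle g(\ker f)\rangle\rangle$, where $g\colon\pi_1(E)\to\pi_1(N)$. Since $\ker f$ is cyclic, generated by the class $\alpha$ of an $S^1$-orbit in $E$, setting the pushout to be trivial yields $\pi_1(N)=\langle\langle \beta\rangle\rangle$, where $\beta=g(\alpha)\in\pi_1(N)$ is represented by an $S^1$-orbit through a point of $N$. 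Symmetrically, $\pi_1(F)=\langle\langle \beta'\rangle\rangle$, where $\beta'\in\pi_1(F)$ is the image of the normal-circle class $\alpha'\in\pi_1(E)$.

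To conclude that these normal closures are cyclic subgroups, I would apply the standard torus argument: for any loop $\gamma$ in $N$ based at $p$, the map $H\colon S^1\times S^1\to N$, $H(s,t)=s\cdot\gamma(t)$, sends the two commuting generators of $\pi_1(T^2)=\zzz^2$ to $\beta$ and $[\gamma]$ respectively, so $\beta$ commutes with every element of $\pi_1(N)$. Hence $\langle\langle \beta\rangle\rangle=\langle \beta\rangle$ is cyclic. For the $\pi_1(F)$ case, the same argument applies once we exhibit a circle action on $F$ whose orbit class is $\beta'$. Such an action arises from fiberwise rotation of the normal bundle of $N$: this defines a circle action on $E$ that commutes with the given $S^1$-action (both lie in the abelian structure group $SO(2)$ of the normal bundle for any $S^1$-invariant metric), and therefore descends to a circle action on $F=E/S^1$ whose orbits are precisely the images of the normal circles to $N$. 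Applying the torus argument to this induced action shows that $\beta'$ is central, and so $\pi_1(F)$ is cyclic.

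The most delicate step will be the $\pi_1(F)$ case: verifying that the image in $\pi_1(F)$ of the normal-circle class $\alpha'\in\pi_1(E)$ really is an orbit class of a well-defined commuting auxiliary circle action on $F$, so that the centrality argument---which uses the ambient $S^1$-action directly on the $N$ side---runs symmetrically on the $F$ side.
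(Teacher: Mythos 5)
Your proposal is correct, and it takes a genuinely different (and more elementary) route than the paper's. The paper follows Galaz-Garc\'ia--Spindeler \cite{GGSp}: the two commuting circle actions on $E$ (the original $T^1_1$ on $M$ and the fibrewise rotation $T^1_2$ of $\nu N$) are used to build an auxiliary space $\hat{M}\cong (E\times S^3\times S^3)\times_{T^1_1} D^2\cup (E\times S^3\times S^3)\times_{T^1_2}D^2$, which sits in a chain of fibrations over $M$ and over $E\times_{T^2}(S^3\times S^3)$; simple connectivity of $M$ forces $\pi_1(E\times_{T^2}(S^3\times S^3))=0$, from which the surjectivity of $\pi_1(T^2)\to\pi_1(E)$ is extracted, and then $\pi_1(F)\cong\pi_1(E)/\mathrm{im}(i_1)$ is exhibited as a quotient of $\pi_1(T^1_2)\cong\zzz$. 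You arrive at the same conclusion with much less machinery: Seifert--van Kampen with one leg surjective identifies $\pi_1(N)$ (resp.\ $\pi_1(F)$) with the normal closure of the image of the circle that dies in the other factor, and the standard torus trick shows each such image is a central element, so the normal closure is cyclic. This is a clean and entirely correct shortcut; what the paper's route buys, essentially, is the ability to quote the $\hat M$-construction from \cite{GGSp} where it was already carried out in detail, whereas your argument would have to be written from scratch (but is shorter to write).

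A couple of points to tighten. Your centrality argument for $\pi_1(F)$ hinges on the $T^1_2$ fibrewise rotation being a genuine $SO(2)$-action on $\nu N$, which requires $\nu N$ (equivalently $N$, since $M$ is orientable) to be orientable; the paper explicitly invokes the orientability of $N$ from \cite{GGSp} at exactly this point, and you should record it as a hypothesis you are using rather than take it for granted. Also, you correctly flag the ``delicate step'' of checking that $T^1_2$ descends from $E$ to $F=E/T^1_1$; the verification is exactly that $T^1_1$ acts on $\nu N$ by oriented fibrewise isometries covering the $T^1_1$-action on $N$, so on each fibre its transition maps lie in $SO(2)$ and commute with the rotations $R_\theta$, whence $T^1_2$ permutes $T^1_1$-orbits in $E$ and therefore acts on $F$, with the image of a normal circle being a $T^1_2$-orbit in $F$. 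With these two points spelled out, your proof is complete and stands on its own, independently of the $\hat M$-construction.
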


Note that this theorem is purely topological in nature and generalizes a similar result for closed, simply-connected non-negatively curved  $S^1$-fixed point homogeneous $5$-manifolds (see the proof of Proposition 3.6 in  \cite{GGSp}). Indeed, the arguments in their proof are, for the most part, completely general. We will therefore only give an outline of the proof of Theorem \ref{cyclic} here, as the reader may refer to \cite{GGSp} for more details.
\begin{proof}

Let $E := \partial D(F) \cong \partial D(N)$. Then, as shown in the proofs of Propositions 3.5 and 3.6 in \cite{GGSp}, the following is true: $N$ is orientable and there exists a $T^1_2$-action on the normal bundle of $N$ obtained by orthogonally rotating the fibers. Using this action,  they define $\hat{M} \cong(E \times S^3 \times S^3)\times_{T^1_1} D^2\cup (E \times S^3 \times S^3)\times_{T^1_2}  D^2$ and show in \cite{GGSp} that one obtains the following fibrations:

$$S^3 \times S^3 \rightarrow \hat{M} \rightarrow M,$$

and 
$$S^3  \rightarrow \hat{M}  \rightarrow  E \times_{T^2} (S^3 \times S^3).$$ 

Since $M$ is simply-connected, it follows that $\hat{M}$ is simply connected and so $\pi_1(E \times_{T^2} (S^3 \times S^3)) = 0$ and we get the following
three exact sequences: 
$$ 0\rightarrow \pi_2(E \times S^3 \times S^3)\rightarrow \pi_2(E \times_{T^2} (S^3 \times S^3))\rightarrow \pi_1(T^2 )\xrightarrow{j} \pi_1(E \times S^3 \times S^3)\rightarrow 0,$$

$$0\rightarrow \pi_2(E)\rightarrow \pi_2(F) \rightarrow \pi_1(T^1_1) \xrightarrow{i_1} \pi_1(E) \xrightarrow{p_1} \pi_1(F) \rightarrow 0,$$

$$0\rightarrow \pi_2(E) \rightarrow \pi_2(N) \rightarrow \pi_1(T^1_2) \xrightarrow{i_2} \pi_1(E) \xrightarrow{p_2}  \pi_1(N) \rightarrow 0.$$
Let $$k= i_1\oplus i_2 : \pi_1(T^1_1) \oplus \pi_1(T^1_2) \rightarrow \pi_1(E).$$
Since $j$ is surjective,  $k$ is surjective. Then
$\pi_1(T^1_2) \rightarrow \pi_1(E)/\textrm{im}(i_1) \cong \pi_1(F)$ is surjective and hence  $\pi_1(F)$ is cyclic. A similar argument gives us that $\pi_1(N)$ is cyclic.
\end{proof}

\section{Proof of Main Theorem}\label{4}

In this section we present the proof of Main Theorem. 
We first recall the following lemma from \cite{GGS2}.
\begin{lemma}\label{l:nofreeoralmostfree}  \cite{GGS2}  Let $T^n$ act on $M^{n+3}$, a closed, simply-connected smooth manifold. Then $T^n$ cannot act freely or almost freely; that is, some circle subgroup has non-trivial fixed point set.
\end{lemma}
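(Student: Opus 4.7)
The plan is to argue by contradiction using rational cohomology of the orbit space. Assume $T^n$ acts almost freely (which includes the free case) on $M^{n+3}$. The orbit space $B := M/T^n$ is then a closed $3$-dimensional orbifold. It is orientable: since $M$ is simply-connected it is orientable, and since $T^n$ is connected it acts by orientation-preserving diffeomorphisms, so each finite isotropy representation on a local slice preserves orientation. Consequently, rational Poincar\'e duality gives $H^k(B;\qqq) \cong H^{3-k}(B;\qqq)$, and in particular $\dim H^2(B;\qqq) = \dim H^1(B;\qqq)$.

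Next I would analyze the Borel construction $M_{T^n} := ET^n \times_{T^n} M$. Since every isotropy $T^n_x$ is finite, the natural projection $M_{T^n} \to B$ has fibers $BT^n_x$ with trivial rational cohomology in positive degrees; by the Leray theorem this yields $H^*(M_{T^n};\qqq) \cong H^*(B;\qqq)$. Then, in the Serre spectral sequence
\[
E_2^{p,q} = H^p(BT^n;\qqq) \otimes H^q(M;\qqq) \Rightarrow H^{p+q}(M_{T^n};\qqq)
\]
of the fibration $M \to M_{T^n} \to BT^n$ (with simple coefficients, since $T^n$ is connected), I would extract two facts. First, $H^1(M_{T^n};\qqq) = 0$, because $E_2^{1,0} = H^1(BT^n;\qqq) = 0$ and $E_2^{0,1} = H^1(M;\qqq) = 0$ (the latter by simple connectivity). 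Second, the polynomial generators $x_1,\dots,x_n \in H^2(BT^n;\qqq)$ span $E_2^{2,0} \cong \qqq^n$ and must survive to $E_\infty^{2,0}$: the only differential that could hit $E_r^{2,0}$ would come from $E_r^{2-r,r-1}$, which vanishes for $r=2$ (as $E_2^{0,1}=0$) and has $p<0$ for $r\geq 3$; outgoing differentials from $E_r^{2,0}$ land in bidegree $q=1-r<0$. Therefore $\dim H^2(M_{T^n};\qqq) \geq n$.

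Combining the two extractions yields $n \leq \dim H^2(B;\qqq) = \dim H^1(B;\qqq) = 0$, contradicting $n \geq 1$, which finishes the argument. The step requiring the most care is verifying the rational equivalence $H^*(M_{T^n};\qqq) \cong H^*(B;\qqq)$ together with rational Poincar\'e duality on $B$, since $B$ is generally not a manifold; both are, however, classical consequences of the Leray theorem applied to the almost free Borel fibration and of the existence of a rational fundamental class on a closed oriented orbifold, so no input beyond standard equivariant topology is needed.
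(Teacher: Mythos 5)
Your argument is correct. The Borel fibration $M_{T^n}\to M/T^n$ has rationally acyclic fibers $BT^n_x$ when all isotropy is finite, so $H^*(M_{T^n};\qqq)\cong H^*(M/T^n;\qqq)$; the Serre spectral sequence of $M\to M_{T^n}\to BT^n$ then shows, exactly as you verified, that $E_2^{2,0}\cong\qqq^n$ survives to $E_\infty$ while $H^1(M_{T^n};\qqq)=0$. The orientability point you flagged also holds: the finite isotropy $\Gamma=T^n_x$ is central in $T^n$, so it acts trivially on the orbit direction $T_x(T^n x)$ and, being orientation-preserving on $T_xM$, therefore acts orientation-preservingly on the slice, making $M/T^n$ a closed oriented $3$-orbifold to which rational Poincar\'e duality applies. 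This gives $0<n\le\dim H^2(M/T^n;\qqq)=\dim H^1(M/T^n;\qqq)=0$, the desired contradiction. This is the classical Borel--Hsiang argument for ruling out cohomologically free torus actions, and it handles the free and almost-free cases uniformly. It is a genuinely different route from the one in the cited source \cite{GGS2}, which is geometric: there one invokes Bredon's theorem that the orbit space of a simply-connected $G$-manifold under a compact connected group with connected orbits is simply connected, identifies $M/T^n$ in the free case with $S^3$ via the Poincar\'e conjecture, and derives a contradiction from the triviality of principal $T^n$-bundles over $S^3$, with a Seifert-type analysis to handle finite isotropy. Your approach is shorter and avoids the Poincar\'e conjecture, at the cost of invoking the Leray theorem and orbifold Poincar\'e duality; the geometric approach yields an explicit description of the orbit space, which \cite{GGS2} needs for its subsequent classification work.
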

Note that 
by Lemma \ref{l:nofreeoralmostfree}, a $T^3$-action on a closed, simply-connected $M^6$ must have circle isotropy. Therefore, we may break the proof of the Main Theorem into 
three cases, depending on the rank of the largest isotropy subgroup, which will be either $1$, $2$ or $3$.
Theorem \ref{Wiemeler}  gives us an equivariant diffeomorphism classification of those manifolds with $T^3$ isotropy.   Thus, we have proven Part (1)  of the following theorem.

\begin{theorem}\label{isotropy} Let $M^6$ be a closed, simply-connected, non-negatively curved Riemannian $6$-manifold admitting an isometric, effective $T^3$-action.  Then the  action has singular isotropy of rank $1$, $2$ or $3$ and the following hold.
\begin{enumerate}
\item If the rank of the largest singular isotropy subgroup is equal to $3$, then $M^6$ is equivariantly diffeomorphic to a torus manifold, that is, it is equivariantly diffeomorphic to one of 
$S^6$, $\ccc P^3$, $(S^3\times S^4)/T^1$, $(S^3\times S^5)/T^2$ or $(S^3\times S^3\times S^3)/T^3$.
\item If the rank of the largest singular isotropy subgroup is less than or equal to $2$, then $M^6$ is equivariantly diffeomorphic to $S^3\times S^3$.
\end{enumerate}
\end{theorem}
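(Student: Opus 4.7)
By Lemma \ref{l:nofreeoralmostfree}, an effective isometric $T^3$-action on a closed simply-connected $M^6$ cannot be almost free, so some circle subgroup has non-empty fixed point set and the rank of the largest singular isotropy is at least $1$; since $T^3$ has rank $3$, this rank lies in $\{1,2,3\}$. For Part (1), if the largest singular isotropy has rank $3$ then some $p\in M$ is fixed by all of $T^3$, making the action isotropy-maximal and $M^6$ a $6$-dimensional torus manifold. Theorem \ref{Wiemeler} then produces an equivariant diffeomorphism $M^6 \cong \mathcal{Z}/T^\ell$ with $\mathcal{Z}=\prod_{i<r}S^{2n_i}\times\prod_{i\geq r}S^{2n_i-1}$, $n_i\geq 2$, for a free linear torus action. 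Since $\dim\mathcal{Z}=6+\ell$ and the quotient must still admit a $T^3$-fixed point, a brief enumeration rules out all but the five cases $\mathcal{Z}\in\{S^6,\,S^7,\,S^3\times S^4,\,S^3\times S^5,\,S^3\times S^3\times S^3\}$, giving the five listed models.

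For Part (2) the plan is to produce a circle $S^1_0\subset T^3$ acting fixed point homogeneously on $M$, so as to invoke Theorem \ref{Spindeler}. In the rank-$2$ subcase, pick $p$ with $T^2$-isotropy: the $T^2$-orbit through $p$ is $1$-dimensional and the $5$-dimensional slice, as a faithful $T^2$-representation, splits as $V_{\chi_1}\oplus V_{\chi_2}\oplus \mathbb{R}$ with $\chi_1,\chi_2$ generating the character lattice. Taking $S^1_0:=\ker\chi_1\subset T^2$, the fixed set at $p$ has tangent dimension $1+2+0+1=4$, exhibiting a codimension-$2$ component $F$ and hence $S^1_0$-fixed point homogeneity. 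In the rank-$1$ subcase an analogous weight analysis at points with circle isotropy, combined with orbit-space information from cohomogeneity~$3$, is used to locate an $S^1_0$ with codimension-$2$ fixed set. In either subcase Theorem \ref{Spindeler} yields
\[
M^6 \;=\; D(F)\cup_E D(N),
\]
with $N$ a closed $T^3$-invariant submanifold of codimension $\geq 2$ by Lemma \ref{codim}, and both $F, N$ totally geodesic with non-negative curvature and residual torus actions.

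To finish Part (2), the plan is to identify $M^6$ from the topology of this disk-bundle decomposition. Theorem \ref{cyclic} shows that $\pi_1(F)$ and $\pi_1(N)$ are cyclic, and combining this with Theorem \ref{VanKampen}, Lemma \ref{l:pi1}, and simple connectivity of $M$ pins the fundamental groups down exactly. The residual torus actions on $F$ and $N$ are of high rank relative to their dimensions, so these submanifolds are identified up to equivariant diffeomorphism via Theorems \ref{Wiemeler} and \ref{ES} together with the dimension-$4$ Kleiner / Grove-Wilking classification and the dimension-$5$ Galaz-Garc\'ia-Searle classification. A Mayer-Vietoris computation on $M=D(F)\cup_E D(N)$ then yields $H_*(M^6)\cong H_*(S^3\times S^3)$; Corollary \ref{Wall} gives a diffeomorphism $M^6\cong S^3\times S^3$, and Lemma \ref{equivariance} promotes this to an equivariant diffeomorphism. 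The main obstacle will be the rank-$1$ subcase: extracting the $S^1$-fixed point homogeneous subaction is not automatic from weight data alone (since no isotropy larger than a circle exists), and once the disk-bundle decomposition is in hand one must verify that the possible $(F, N)$ pairs really do reproduce $H_*(S^3\times S^3)$ rather than a nearby simply-connected $6$-manifold with matching cyclic $\pi_1$ pieces (for instance ruling out $\mathbb{C}P^3$, already accounted for in Part (1)).
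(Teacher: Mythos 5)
Your Part (1) and your rank-two slice computation are fine (the latter is essentially the paper's own reasoning in its Case (A2), where the $T^2$-isotropy point produces the nesting $T^1\subset F^2\subset F^4\subset M^6$ and hence $S^1$-fixed point homogeneity). The genuine gap is the rank-one subcase. Your whole strategy for Part (2) hinges on locating a circle $S^1_0\subset T^3$ with a codimension-two fixed point component so that Theorem \ref{Spindeler} applies, but when all singular isotropy has rank one no such circle need exist: at a point with isotropy $S^1_p$ the $4$-dimensional slice representation may have two nontrivial weights, so the $S^1_p$-fixed component through that point is only $2$-dimensional (an isolated singular $T^2$-orbit), and no circle subgroup acts fixed point homogeneously. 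This situation is not vacuous, and you yourself flag it as ``the main obstacle'' without supplying an argument, so the proposal does not prove Part (2). The paper treats it as a separate Case (B) by a completely different route: the orbit space is $S^3$ (Bredon plus the resolution of the Poincar\'e conjecture), there are exactly four isolated singular $T^2$-orbits (lower bound from Proposition \ref{4.5}, upper bound from the Alexandrov-geometry Lemma \ref{l:A4}), finite isotropy is controlled via the admissible weighted graphs, and $M^6$ is decomposed as disk bundles over two cohomogeneity-one $4$-manifolds $S^1\times S^3$ corresponding to opposite arcs of an unknotted circle in $M^*$; the equivariant conclusion then requires extending the $T^3$-action to a smooth $T^4$-action by lifting an $S^1$-action built from a vector field on $M^*$, applying Oh's theorem through Theorem \ref{cohomk}, and invoking Theorem \ref{ES}. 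None of this machinery is replaceable by the fixed-point-homogeneous disk-bundle argument you outline.

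There are also inaccuracies in the fixed point homogeneous cases you do treat. The submanifolds $F$ and $N$ have $\pi_1\cong\mathbb{Z}$ (Proposition \ref{p:fix}), so they cannot be identified via Theorem \ref{Wiemeler}, Theorem \ref{ES}, or the Kleiner/Grove--Wilking and Galaz-Garc\'ia--Searle classifications, all of which assume simple connectivity; the paper instead uses the Orlik--Raymond classification of $T^2$-actions on $4$-manifolds to show $F$ and $N$ are equivariantly $S^1\times S^3$, that $E\cong T^2\times S^3$, and then applies Lemma \ref{equivariance}. In addition, Theorem \ref{Spindeler} does not assert that $N$ is totally geodesic, and in the $T^2$-isotropy case the linearity of the resulting action on $S^3\times S^3$ comes from McGavran--Oh, an ingredient absent from your outline.
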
 

It remains to prove Part (2) of Theorem \ref{isotropy}.   We break the proof into two cases: Case (A), where the action is $T^1$-fixed point homogeneous for some $T^1\subset T^3$ and Case (B), where 
no circle subgroup acts fixed point homogeneously.

\subsection{Proof of Case (A) of Part (2) of Theorem  \ref{isotropy}}
\label{ss:1}

We have two further subcases to consider: Case (A1), where the action admits only $T^1$ isotropy and Case (A2), where the actions admits $T^2$ isotropy.

We first consider Case (A1), where some circle acts fixed point homogeneously and the induced $T^2$-action on the codimension two fixed point set is either free or almost free.
We will prove the following theorem.

\begin{theorem}\label{fph} Let $T^3$ act isometrically and effectively on $M^6$, a closed, simply-connected, non-negatively curved Riemannian manifold. Suppose that 
the action is $S^1$-fixed point homogeneous and that the largest isotropy subgroup of the $T^3$-action is of rank one. Then $M$ is equivariantly diffeomorphic to $S^3\times S^3$ with a smooth $T^3$-action.
\end{theorem}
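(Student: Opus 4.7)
The strategy is to apply Spindeler's disk bundle decomposition, identify both $F$ and $N$ as $S^3\times S^1$, compute $H_*(M)$ by Mayer--Vietoris, and invoke Corollary \ref{Wall}.

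By Theorem \ref{Spindeler} and Lemma \ref{equivariance}, the $S^1$-fixed-point-homogeneous action produces a $T^3$-equivariant decomposition $M=D(F)\cup_E D(N)$, where $\dim F=4$ and $N$ is a closed, smooth, $T^3$-invariant submanifold of codimension $\geq 2$ (Lemma \ref{codim}). Since every $T^3$-isotropy has rank at most $1$, the $T^3$-isotropy at each $p\in F$ is exactly the FPH circle, so $T^2:=T^3/S^1$ acts freely on $F$, giving $\chi(F)=0$. By Theorem \ref{cyclic}, $\pi_1(F)$ is cyclic, and by Lemma \ref{l:pi1}, $\mathrm{rk}\,H_1(F)\leq 1$. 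A finite cyclic $\pi_1(F)$ would force $b_1(F)=b_3(F)=0$ and $\chi(F)=2+b_2(F)\geq 2$, contradicting $\chi(F)=0$; hence $\pi_1(F)=\mathbb{Z}$. Lifting a closed geodesic representing a generator of $\pi_1(F)$ yields a line in the universal cover, so by Cheeger--Gromoll splitting $\widetilde F\cong S^3\times\mathbb{R}$; orientability together with the path-connectedness of $\mathrm{Diff}^+(S^3)$ then give $F\cong S^3\times S^1$.

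Because $H^2(F)=0$, the normal bundle of $F$ is trivial, so $E\cong F\times S^1\cong S^3\times T^2$; in particular $\pi_1(E)=\mathbb{Z}^2$. If $\mathrm{codim}\,N\geq 3$, Theorem \ref{VanKampen}(2) (applied with the roles swapped) would force $\pi_1(E)\cong\pi_1(N)$, which is cyclic by Theorem \ref{cyclic}: contradiction. Hence $\mathrm{codim}\,N=2$ and $\dim N=4$. The homotopy exact sequence of the $S^1$-bundle $E\to N$ reads $\mathbb{Z}\to\mathbb{Z}^2\to\pi_1(N)\to 0$; since $\pi_1(N)$ is cyclic, the fiber class must be primitive in $\mathbb{Z}^2$, so $\pi_1(N)=\mathbb{Z}$. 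As $N$ arises as a totally geodesic soul of an open non-negatively curved region in Spindeler's construction, it inherits non-negative curvature, and the same Cheeger--Gromoll argument yields $N\cong S^3\times S^1$.

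Mayer--Vietoris for $M=(S^3\times S^1\times D^2)\cup_{S^3\times T^2}(S^3\times S^1\times D^2)$ now gives the following: simple connectedness of $M$ forces the map $\mathbb{Z}^2=H_1(E)\to H_1(D(F))\oplus H_1(D(N))=\mathbb{Z}^2$ to be surjective, hence an isomorphism of free abelian groups of equal rank, so $H_2(M)=0$. A direct calculation in the same sequence (using $H_3(E)=H_3(D(F))=H_3(D(N))=\mathbb{Z}$ with $H_2(D(F))=H_2(D(N))=0$ and $H_2(E)=\mathbb{Z}$) yields $H_3(M)=\mathbb{Z}^2$; Poincar\'e duality on the closed orientable $M$ gives $H_4(M)=H_5(M)=0$ and $H_6(M)=\mathbb{Z}$. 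Thus $H_*(M)\cong H_*(S^3\times S^3)$, and Corollary \ref{Wall} provides a diffeomorphism $M\cong S^3\times S^3$; transporting the $T^3$-action through this diffeomorphism gives the claimed equivariant diffeomorphism with a smooth $T^3$-action.

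The most delicate step is the identification of $N$: although its codimension and fundamental group are pinned down by the topological constraints coming from $F$ and the triviality of its normal bundle, extracting the diffeomorphism type $S^3\times S^1$ from $\pi_1(N)=\mathbb{Z}$ requires that $N$ inherit a non-negatively curved metric, which comes from its soul-theoretic realization rather than from the ambient restriction alone.
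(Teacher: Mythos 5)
Your overall scaffolding (Spindeler decomposition, $\pi_1(F)\cong\zzz$, Mayer--Vietoris, Corollary~\ref{Wall}) matches the paper, but you identify $F$ and $N$ as $S^3\times S^1$ by a genuinely different mechanism, and that is where the problems lie. The paper never touches the intrinsic geometry of $F$ or $N$: it shows $\chi=0$, $\pi_1\cong\zzz$, orientability and $\dim=4$ purely topologically (Proposition~\ref{p:fix}), deduces $H_*\cong H_*(S^1\times S^3)$, and then invokes the Orlik--Raymond equivariant classification of $T^2$-actions on $4$-manifolds to obtain $F\cong N\cong S^1\times S^3$ \emph{equivariantly}, after which $E$ is identified as $T^2\times S^3$ and Lemma~\ref{equivariance} produces the equivariant disk-bundle picture directly. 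You instead apply Cheeger--Gromoll to $F$ and $N$. For $F$ this is defensible ($F$ is a totally geodesic fixed-point set, hence inherits $\sec\ge 0$, and the mapping-torus argument via connectedness of $SO(4)$ is fine, though the phrase ``lifting a closed geodesic yields a line'' should be replaced by an appeal to the Cheeger--Gromoll structure theorem for compact manifolds with infinite $\pi_1$). For $N$, however, the step breaks down: Theorem~\ref{Spindeler} only asserts that $N$ is a smooth $G$-invariant submanifold without boundary; it does not say that $N$ is totally geodesic or that $N$ carries a non-negatively curved metric, and you cannot extract this from the references as given in the paper. The paper deliberately avoids this by using Orlik--Raymond, so your assertion that ``$N$ arises as a totally geodesic soul'' and therefore inherits $\sec\ge 0$ is an unproven claim that your Cheeger--Gromoll argument for $N$ rests on entirely.

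There is a second, smaller gap in your argument that $\codim N=2$: you rule out $\codim N\ge 3$ by saying $\pi_1(N)$ is cyclic ``by Theorem~\ref{cyclic}'' and hence cannot equal $\zzz^2$. But Theorem~\ref{cyclic} has $\codim N=2$ as a hypothesis, so the reasoning is circular. The fix is easy and is essentially what the paper does in Proposition~\ref{p:fix}: if $\codim N\ge 3$ then $\pi_1(E)\cong\pi_1(N)$ and hence $H_1(E)\cong H_1(N)$, so the surjection $H_1(E)\to H_1(F)\oplus H_1(N)$ coming from Mayer--Vietoris and $H_1(M)=0$ is impossible for rank reasons, since $H_1(F)\cong\zzz\ne 0$. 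Finally, note that your closing step (``transport the $T^3$-action through the diffeomorphism'') does give the stated conclusion since the theorem only asks for \emph{some} smooth $T^3$-action on $S^3\times S^3$; the paper, via Lemma~\ref{equivariance} together with the equivariant identifications of $F$, $N$, and $E$, obtains a more explicit equivariant disk-bundle model, but the logical content for this particular statement is the same.
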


The strategy for the proof of Theorem \ref{fph} will be to show that $M^6$ decomposes as a union of two disk bundles, each a $2$-disk bundle over a $4$-manifold. One can then show that $M^6$ has the homology groups of $S^3\times S^3$ and by Corollary \ref{Wall}, we then obtain a diffeomorphism classification. In order to show the equivariant diffeomorphism, we will need to prove that the $4$-manifolds are equivariantly diffeomorphic to $S^1\times S^3$ and use Lemma \ref{equivariance}.

We begin by establishing some notation. Let $F$ be the fixed point set component of the circle action of maximal dimension on $M^6$ and let $N$ be as in Theorem \ref{Spindeler} such that $M^6$ is given as 
\beq\label{disc}
M  = D(F) \cup_E D(N),
\eeq
where $E$ is the common boundary of the two disk bundles. 
Observe that $F$ is a closed, orientable, non-negatively curved $4$-dimensional submanifold of $M^6$, admitting an isometric $T^2$-action.  Among other things, we will show in Proposition \ref{p:fix}
that under these hypotheses, $N$ is also $4$-dimensional. 

\begin{remark} For the remainder of this subsection, we will always assume that there is a $T^3$ isometric and effective action on $M^6$,  a closed, simply-connected, non-negatively curved Riemannian manifold, such that 
the action is $S^1$-fixed point homogeneous and the largest isotropy subgroup of the $T^3$-action is of rank one. As such, we will omit the statement of these hypotheses in what follows. 
\end{remark}

The following proposition shows that the topology of both $F$ and $N$ is strongly restricted when $M^6$ is $S^1$-fixed point homogeneous. 

\begin{proposition}\label{p:fix} Let  $M'$ denote either $F$ or $N$.  Then the following are true: 
\begin{enumerate}
\item $\pi_1(M')\cong \zzz$;
\item $\chi(M')=0$;
\item $M'$ is orientable; and 
\item $\dim(M')=4$.
\end{enumerate}

\end{proposition}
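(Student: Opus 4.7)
My plan is to prove the four assertions in parallel for $F$ and for $N$, handling $F$ first and then using its conclusions to constrain $N$.

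For $F$, the dimension follows immediately from the definition of $S^1$-fixed point homogeneity: the maximal $S^1$-fixed component has codimension two, so $\dim F = 4$. Orientability is the standard observation that a fixed component of an isometric circle action on an orientable manifold inherits an orientation (the $S^1$ rotates the rank-two normal plane, providing a complex structure on the normal bundle). For $\chi(F)=0$, I will pass to the quotient $T^2 = T^3/S^1$, which acts on $F$; any isotropy subgroup of the $T^3$-action at $x \in F$ contains $S^1$ and has rank at most one by hypothesis, so equals $S^1$ up to a finite extension. The induced $T^2$-action on $F$ therefore has finite stabilizers, producing a nowhere-zero Killing field and forcing $\chi(F)=0$. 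Finally, Theorem \ref{cyclic} yields $\pi_1(F)$ cyclic, and if $\pi_1(F)$ were finite then the universal cover $\tilde F$ would be a closed simply-connected $4$-manifold with $\chi(\tilde F) = |\pi_1(F)|\,\chi(F) = 0$, contradicting the fact that $\chi \geq 2$ on any such manifold. Hence $\pi_1(F) \cong \zzz$.

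For $N$, I first pin down the dimension. Lemma \ref{codim} gives $\dim N \leq 4$. Assume for contradiction $\dim N \leq 3$, so that $D(N)$ has rank $r = 6 - \dim N \geq 3$ and the common boundary $E$ is an $S^{r-1}$-bundle over $N$ with simply-connected fiber. The bundle long exact sequence gives $\pi_1(E) \cong \pi_1(N)$, which is cyclic by Theorem \ref{cyclic}. Viewing $E$ as the circle bundle over $F$ associated to $D(F)$ yields a surjection $\pi_1(E) \to \pi_1(F) \cong \zzz$. Seifert--van Kampen applied to $M = D(F) \cup_E D(N)$ then produces
\[
\pi_1(M) \cong \pi_1(F) *_{\pi_1(E)} \pi_1(N) \cong \pi_1(F) \cong \zzz,
\]
using that $\pi_1(E) \to \pi_1(N)$ is an isomorphism; this contradicts $\pi_1(M) = 0$. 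The cases $\dim N \in \{0,1,2\}$ are ruled out analogously: for $\dim N = 2$ the argument combined with $\chi(N) = 0$ (established below) would force $N$ to be a closed surface with cyclic fundamental group and vanishing Euler characteristic, but no such surface exists; for $\dim N = 0$, $E \cong S^5$ is simply-connected, which would force $\pi_1(F) = 0$, contradicting the previous paragraph. Hence $\dim N = 4$.

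With $\dim N = 4$ in hand, the remaining three properties follow quickly. Orientability of $N$ is immediate from the observation that the complement $M \setminus D(F)$ is an open submanifold of the orientable $M^6$ and deformation retracts onto $N$. For the Euler characteristic, the $T^3$-action has no fixed points (its largest isotropy has rank one), so $\chi(M) = \chi(M^{T^3}) = 0$; combining with the additivity $\chi(M) = \chi(F) + \chi(N) - \chi(E) = \chi(F) + \chi(N)$ (since $\dim E$ is odd) gives $\chi(N) = 0$. For the fundamental group, Theorem \ref{cyclic} again yields $\pi_1(N)$ cyclic, and the same universal-cover Euler-characteristic argument used for $F$ rules out the finite cyclic possibility, so $\pi_1(N) \cong \zzz$. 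The main obstacle will be the dimension step for $N$: one must parse each small-dimensional possibility separately and, in each case, extract a contradiction with the simple connectivity of $M$ via Seifert--van Kampen using both bundle structures on $E$, with cyclicity from Theorem \ref{cyclic} and the structural input $\pi_1(F) \cong \zzz$ from the first paragraph as essential ingredients.
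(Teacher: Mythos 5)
Your overall strategy (settle $F$ first, then use it to pin down $N$) matches the paper's, and several of your moves are cleaner variants: the Seifert--van Kampen argument that a codimension $\geq 3$ submanifold $N$ would give $\pi_1(M) \cong \pi_1(F) \neq 0$ is a tidier version of the paper's Mayer--Vietoris rank count, and the universal-cover Euler-characteristic argument for excluding finite $\pi_1$ replaces the paper's combination of Lemma~\ref{l:pi1} with a torsion-free rank argument. However, there is a genuine gap in the orientability of $N$. Your claim is that $M \setminus D(F)$ is an orientable open $6$-manifold which deformation retracts onto $N$, hence $N$ is orientable. That implication is false: for a closed submanifold $N^{4} \subset M^{6}$, orientability of $M$ only forces $w_1(\nu N) = w_1(TN)$, so the total space of the normal disk bundle is orientable whenever $w_1(\nu N) = w_1(TN)$, regardless of whether $N$ itself is orientable (take $N = \mathbb{R}P^2$ and a rank-$2$ bundle $\xi$ with $w_1(\xi) = w_1(T\mathbb{R}P^2)$ for a low-dimensional model of the phenomenon). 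The paper instead obtains orientability of $N$ from its $T^3$-invariance: either some circle fixes $N$ pointwise, in which case $N$ is a fixed point set of an isometric $S^1$-action on an orientable manifold and hence orientable; or no circle fixes $N$, so it is cohomogeneity one and diffeomorphic to $S^1 \times M^3$ with $M^3$ a spherical space form, which is orientable. Equivalently, one can quote the orientability of $N$ established in the proof of Theorem~\ref{cyclic} (via Propositions~3.5 and~3.6 of \cite{GGSp}), where one constructs a circle action rotating the fibers of $\nu N$, giving an orientation on $\nu N$ and hence on $N$.

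A secondary, more minor issue is the order of operations with Theorem~\ref{cyclic}. That theorem presupposes $N$ has codimension two; you invoke it to get $\pi_1(F)$ cyclic before $\dim N = 4$ is known, and you also invoke it to get $\pi_1(N)$ cyclic inside the hypothetical where $\dim N \leq 3$, where it manifestly does not apply. Neither call is actually needed at that stage: your Seifert--van Kampen step only requires $\pi_1(F) \neq 0$, and what you have established without Theorem~\ref{cyclic} --- namely $\chi(F) = 0$, $F$ orientable, and the rank bound of Lemma~\ref{l:pi1} --- already gives $\rk(H_1(F)) = 1$, hence $\pi_1(F) \neq 0$, which suffices to conclude $\pi_1(M) \neq 0$ from the pushout $\pi_1(F) *_{\pi_1(E)} \pi_1(N) \cong \pi_1(F)$. (The paper's own write-up has the same presentational circularity and can be repaired the same way.) Once $\dim N = 4$ is known, Theorem~\ref{cyclic} legitimately applies to both $F$ and $N$, and your universal-cover argument then completes Part~(1).
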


\begin{proof}
We will first prove the proposition holds for $M'=F$.
If we assume that $\chi(F)\neq 0$, then the induced $T^2$-action on $F$ would have non-empty fixed point set and thus there is a point in $M^6$ fixed by $T^3$, contrary to our hypothesis that the isotropy subgroups have rank at most $1$. Thus, $\chi(F)=0$. 

It follows from Lemma \ref{l:pi1}  that  $\rk(H_1(F))\leq 1$. Suppose then that $\rk(H_1(F))= 0$, to obtain a contradiction.
$F$ is orientable, since it is a fixed point set of a circle action and $M^6$ is orientable. Therefore $\chi(F)$ is strictly positive, a contradiction.
Thus $\rk(H_1(F))=1$.  By Theorem \ref{cyclic}, it follows that $\pi_1(F)\cong \zzz$.

We will now show that the proposition holds for $M'=N$. We will first show that $\dim(N)=4$.
Since $M^6$ decomposes as a union of disk bundles over $F$ and $N$, respectively, and $M^6$ is simply-connected, from the Mayer Vietoris sequence of the triple $(M, F, N)$, we have the following long exact sequence:
\beq\tag{4.2}\label{1}
\cdots \rightarrow H_1(E)\rightarrow H_1(F)\oplus H_1(N)\rightarrow 0.
\eeq
 Now assume $N$ is not $4$-dimensional. Note first that by Lemma \ref{codim} $\textrm{codim}(N) \geq 3$. Moreover, by Theorem \ref{VanKampen}, $\pi_1(E)\cong \pi_1(N)$, hence 
 $H_1(E)\cong H_1(N)$.  This combined with the fact that Part (1) of the proposition holds for $F$ gives a contradiction to the fact that the map in Display (\ref{1}) is onto. Hence $N$ is $4$-dimensional.  
 
 Note that by Lemma \ref{equivariance}, $N$ is $T^3$-invariant. We have two cases to consider: Case (1), $N$ is not fixed by any circle subgroup of $T^3$ and Case (2), $N$ is fixed by some circle subgroup of $T^3$.  Since $M$ is simply-connected, the  Double Soul Theorem \ref{DST} 
implies that $N$ cannot be fixed by the same circle that fixes $F$, so we may apply Theorem \ref{cyclic} to show that $\pi_1(N)\cong \zzz$. 
The remainder of the results now follow as they did  for $F$.
 
It remains to consider Case (1), that is 
where $N$ is not fixed by any circle subgroup. Then, since it is invariant under the $T^3$-action, it follows that it is a cohomogeneity one submanifold and hence diffeomorphic 
to $S^1\times M^3$ (see Pak \cite{Pak} and Parker \cite{Pa}). Recall by Lemma \ref{l:pi1} that $\rk(H_1(N))\leq 1$, so by the K\"unneth formula, it follows that  $H_1(M^3)$ is finite. Hence $M^3$ is one of  $S^3$ or $L_{p, q}$  (see Mostert \cite{M} and Neumann \cite{N}). 
In particular, $N$ is an orientable submanifold with $\chi(N)=0$ and we may apply Theorem \ref{cyclic} once again to show that $\pi_1(N)\cong \zzz$. 
\end{proof}

We can now prove the following proposition, which tells us  that $M^6$ has the same homology groups as $S^3\times S^3$.

\begin{proposition}\label{homology} The homology groups of $M^6$ are isomorphic to those
of $S^3\times S^3$, that is, 
\begin{equation*}  
 H_i(M^6) \cong H_i(S^3 \times S^3)  \,\,\text{for all} \,\,i.   
 \end{equation*}
\end{proposition}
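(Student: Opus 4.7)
The strategy is to combine the disk-bundle decomposition $M=D(F)\cup_E D(N)$ (from Theorem \ref{Spindeler}) via Mayer--Vietoris with Poincar\'e duality, simple-connectedness of $M$, and the vanishing of $\chi(M)$ forced by the rank hypothesis. First, Proposition \ref{p:fix} tells us each of $F,N$ is a closed orientable $4$-manifold with $\pi_1\cong\zzz$ and $\chi=0$. The Euler characteristic formula $2-2b_1+b_2=0$ together with $b_1(F)=1$ gives $b_2(F)=0$, and Poincar\'e duality combined with the universal coefficient theorem forces $H_2(F)$ to be torsion-free, so $H_2(F)=0$, and likewise $H_2(N)=0$. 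Hence $F$ and $N$ both have the integral homology of $S^1\times S^3$.

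Next I would identify the relevant part of the homology of $E$. Since $F$ is orientable and $M$ is oriented, $E\to F$ is an oriented circle bundle whose Euler class lies in $H^2(F)\cong H_2(F)=0$, so the Gysin sequence degenerates into short exact sequences
\[
0\to H^n(F)\to H^n(E)\to H^{n-1}(F)\to 0.
\]
In particular $H_2(E)\cong\zzz$ and $p_*\colon H_3(E)\to H_3(F)$ is an isomorphism; the analogous statements hold for the fibration $E\to N$.

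The Mayer--Vietoris sequence for $M=D(F)\cup D(N)$, using $H_2(F)\oplus H_2(N)=0$, reduces in the relevant range to
\[
0\to H_3(E)\xrightarrow{\phi_3}H_3(F)\oplus H_3(N)\to H_3(M)\to H_2(E)\to 0.
\]
Both components of $\phi_3=(p_*,-q_*)$ are isomorphisms $\zzz\to\zzz$, so $\phi_3$ is injective with cokernel $\zzz$, and the resulting extension $0\to\zzz\to H_3(M)\to\zzz\to 0$ splits, yielding $H_3(M)\cong\zzz^2$. Simple-connectedness gives $H_1(M)=0$, and Poincar\'e duality gives $H_5(M)=H^1(M)=0$; since $H_3(M)$ is torsion-free, PD combined with UCT (applied via $H_3(M)\cong H^3(M)=\operatorname{free}(H_3)\oplus\operatorname{tor}(H_2)$) forces $H_2(M)$ and hence $H_4(M)$ to be torsion-free as well.

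Finally, because no isotropy subgroup has rank $3$, $M^{T^3}=\emptyset$, and the localization identity $\chi(M)=\chi(M^{T^3})=0$ applies. Plugging in the Betti numbers already obtained gives $0=\chi(M)=2+2b_2(M)-b_3(M)=2b_2(M)$, so $b_2(M)=0$, and torsion-freeness then yields $H_2(M)=H_4(M)=0$. Assembling everything, $H_*(M)\cong H_*(S^3\times S^3)$. The main delicate point is the identification of $\phi_3$ in Mayer--Vietoris; this rests on the vanishing of the Euler class of $E\to F$, itself a consequence of $H^2(F)=0$, which is what ensures that $H_3(E)$ sees the fundamental classes of both $F$ and $N$ non-trivially and produces the rank-$2$ third homology of $M$.
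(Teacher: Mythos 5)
Your proof is correct and rests on the same pillars as the paper's: the disk bundle decomposition from Theorem~\ref{Spindeler}, the Mayer--Vietoris sequence, the Gysin sequence for the circle bundle $E\to F$ (and $E\to N$), and Poincar\'e duality combined with the universal coefficient theorem. The one genuine difference is in how you dispose of $H_2(M)$ and $H_4(M)$: you first compute $H_3(M)\cong\zzz^2$ from Mayer--Vietoris by exploiting the vanishing Euler classes (hence $p_*,q_*$ isomorphisms on $H_3$), then invoke the torus fixed-point identity $\chi(M)=\chi(M^{T^3})=0$ together with torsion-freeness of $H_2(M)$ (from PD+UCT) to force $b_2=0$. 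The paper instead concludes $H_2(M)\cong H_4(M)=0$ and the torsion-freeness of $H_3(M)$ directly from Mayer--Vietoris (implicitly using the structure $\pi_1(E)\cong U_1\times U_2\cong\zzz^2$ from Theorem~\ref{VanKampen} to see that $H_1(E)\to H_1(F)\oplus H_1(N)$ is injective), and only then runs the $H_3$ computation, treating the two a priori possibilities $H_3(E)=0$ or $\zzz$. Your localization step is a clean and valid substitute, and in fact you gain something: observing that $e\in H^2(F)=0$ immediately shows $H_3(E)\cong\zzz$, so the case $H_3(E)=0$ that the paper entertains never actually occurs. One small point worth spelling out: passing from the Gysin computation $H^2(E)\cong\zzz$ to the homological statement $H_2(E)\cong\zzz$ requires a word about torsion (e.g., via $\text{Ext}(H_2(E);\zzz)\hookrightarrow H^3(E)\cong\zzz$, or by noting that a principal $S^1$-bundle with vanishing Euler class is trivial, so $E\cong S^1\times F$ and K\"unneth applies).
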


\begin{proof}
Consider the Mayer Vietoris sequence of the disk bundle decomposition for $M^6$. Using Poincar\'e Duality and the Universal Coefficient Theorem,  one immediately concludes that $H_2(M^6)\cong H_4(M^6)=0$ and that $H_3(M^6)$ has no torsion. So the only unknown homology group is $H_3(M^6)$. 
Using the Gysin sequence, we see that $\rk(H_3(E))\leq 1$. Further, using the Universal Coefficient Theorem, it follows that $H_3(E)$ has no torsion, thus $H_3(E)$ is either trivial or $\zzz$, and that $H_3(E) \cong \text{Hom}(H_2(E);\zzz)$.  We then have the following exact sequence from the Mayer Vietoris sequence:
$$0\rightarrow H_3(E)\rightarrow \zzz^2\rightarrow H_3(M^6)\rightarrow H_2(E)\rightarrow 0.$$
Now, considering the two possibilities for $H_3(E)$, we find that in both cases $H_3(M^6)=\zzz^2$.

\end{proof}

Combining the result of Proposition \ref{homology} with the fact that $\omega_2=0$, it follows by Corollary \ref{Wall} that $M^6$ is diffeomorphic to $S^3\times S^3$. 
We are now in a position to prove Theorem \ref{fph}.
\begin{proof}[Proof of Theorem \ref{fph}]
 Since both $N$ and $F$ are closed, orientable $4$-manifolds, using Poincar\'e duality, the Universal Coefficient theorem and the fact that $\chi(N)=\chi(F)=0$, it follows that $\beta_2=0$.  Using the fact that $\pi_1(M^4) \cong H_1(M^4) \cong \zzz$, we obtain that $H_i(M^4) \cong H_i(S^1\times S^3)$ for all $i$.
By  the classification work of Orlik and Raymond \cite{OR3}, it follows that both $N$ and $F$ are $T^2$-equivariantly diffeomorphic to $S^1\times S^3$. 
Recall that circle bundles over a base $B$ are classified by their Euler class $e\in H^2(B)$. Since $E$ is a  circle bundle over $S^1\times S^3$, it is therefore a trivial bundle and hence $E=T^2\times S^3$.
 By the classification work of \cite{OR3}, it follows that $E$ is $T^3$-equivariantly diffeomorphic to $T^2\times S^3$.
 
We now have by Lemma \ref{equivariance} that $M^6$ is $T^3$-equivariantly diffeomorphic to 
$$D^2(S^1\times S^3)\cup_{T^2\times S^3} D^2(S^1\times S^3) = S^3\times S^3.$$
We have thus shown that $M^6$ is $T^3$-equivariantly diffeomorphic to $S^3\times S^3$ with the smooth action giving the corresponding disk bundle decomposition.
\end{proof}

We now proceed to prove Case (A2). 
We will prove the following theorem.
\begin{theorem}\label{CaseB} Let $T^3$ act on $M^6$, a $6$-dimensional, closed, simply-connected, non-negatively curved Riemannian manifold. Suppose that the action admits only $T^2$ isotropy. Then $M^6$ is equivariantly diffeomorphic to $S^3\times S^3$ with a linear $T^3$-action.

\end{theorem}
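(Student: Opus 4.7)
The plan is to adapt the strategy of the proof of Theorem \ref{fph} to the present case, using the additional $T^2$-isotropy to upgrade ``smooth'' equivariant diffeomorphism to ``linear.''

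First, since we remain in Case (A), the action is $S^1$-fixed point homogeneous with some fph circle $S^1_0 \subset T^3$, and Theorem \ref{Spindeler} together with Lemma \ref{equivariance} yield a $T^3$-equivariant decomposition $M^6 = D(F) \cup_E D(N)$, where $F$ is the codimension-two fph fixed-point set. The proof of Proposition \ref{p:fix} carries over essentially verbatim: its rank-one isotropy hypothesis is used only to forbid $T^3$-fixed points, and in Case A2 the rank-two bound does the same job, since a $T^2$-fixed point in $F$ would still produce a $T^3$-fixed point in $M^6$. The Mayer--Vietoris and Theorem \ref{VanKampen} arguments then proceed unchanged, so $F$ and $N$ are closed, orientable $4$-manifolds with $\chi = 0$ and $\pi_1 \cong \zzz$, on each of which the quotient torus $T^2 = T^3/S^1_0$ acts effectively.

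Second, I would classify $F$ and $N$ as $T^2$-manifolds via the Orlik--Raymond classification in dimension four. The topological constraints, exactly as in the proof of Theorem \ref{fph}, force $F$ and $N$ to be $T^2$-equivariantly diffeomorphic to $S^1 \times S^3$. The new ingredient in Case A2 is that the $T^2$-isotropy in $M^6$ introduces an additional isotropy circle in the $T^2$-action on $F$ (respectively $N$), with weights determined by the slice representation at a $T^2$-isotropy point. A short case analysis on whether the isotropy $T^2$ contains $S^1_0$ decides whether this extra isotropy circle lies on $F$ or on $N$; in either case the resulting weighted orbit space matches the one coming from a standard linear $T^2$-action on $S^1 \times S^3$.

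Third, the sphere bundle $E = \partial D(F)$ is a principal $S^1_0$-bundle over $F \cong S^1 \times S^3$, and since $H^2(S^1 \times S^3) = 0$ it is trivial, so $E \cong T^2 \times S^3$. Applying Orlik--Raymond once more to the induced $T^3$-action on $E$, one identifies it with the linear $T^3$-action whose weights come from the $T^2$-structure on $F$ together with the fiber rotation $S^1_0$, and the symmetric analysis applies to $\partial D(N)$. Lemma \ref{equivariance} then glues these pieces to produce the $T^3$-equivariant diffeomorphism
\[
M^6 \cong D^2(S^1 \times S^3) \cup_{T^2 \times S^3} D^2(S^1 \times S^3) \cong S^3 \times S^3,
\]
where the gluing is the one arising from the linear model, giving the desired linear $T^3$-action. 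The main obstacle is the equivariant rigidity in the second step: one has to verify that the weight data at a $T^2$-isotropy point completely pins down the Orlik--Raymond invariants of $F$ (and $N$) in the \emph{linear} category, not merely up to smooth equivariance, and it is precisely this extra rigidity, absent in Case A1, that promotes the classification from smooth to linear.
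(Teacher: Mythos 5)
Your proposal tracks the paper's own strategy closely for the first part: you use the same disk-bundle decomposition $M^6 = D(F)\cup_E D(N)$ coming from Theorem \ref{Spindeler} and Lemma \ref{equivariance}, adapt Proposition \ref{p:fix} to get $F,N$ closed orientable $4$-manifolds with $\chi=0$ and $\pi_1\cong\zzz$, apply Orlik--Raymond to identify them $T^2$-equivariantly with $S^1\times S^3$ and $E$ with $T^2\times S^3$, and conclude $M^6\cong S^3\times S^3$. Up to this point you are in agreement with the paper, which in fact reorganizes the argument slightly by recording the nesting $T^1\subset F^2\subset F^4\subset M^6$ and Lemma \ref{lemmaB} before saying ``proceed as in Theorem \ref{fph}.''

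The gap is in the final, and decisive, step: establishing that the resulting $T^3$-action is linear. You assert that the gluing map ``is the one arising from the linear model,'' but this is not justified; even once $F\cong N\cong S^1\times S^3$ carry standard linear $T^2$-actions and $E\cong T^2\times S^3$ is a standard linear $T^3$-space, Lemma \ref{equivariance} only supplies \emph{some} $T^3$-equivariant identification $\partial D(N)\to\partial D(F)$, and a priori a non-standard gluing could yield $S^3\times S^3$ with a nonlinear $T^3$-action. You acknowledge a related issue yourself by calling the rigidity of the weight data ``the main obstacle,'' but you do not actually close it, and the weight analysis alone on $F$, $N$ and $E$ would not suffice without also controlling the equivariant isotopy class of the gluing. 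The paper resolves precisely this point in one line by invoking McGavran--Oh (Section~2 of \cite{McGO}), which supplies the needed rigidity/classification for $T^3$-actions on simply-connected $6$-manifolds with $T^2$-isotropy. Without either that citation or an argument controlling both the weights and the admissible gluing maps, the linearity conclusion remains unproved.
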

For Case (A2), we note first that since there is $T^2$ isotropy, the smallest possible orbit is $T^1$ and we have the following nesting 
$$T^1\subset F^2\subset F^4\subset M^6,$$
where $F^2$ is fixed by $T^2$ and $F^4$ is fixed by a circle subgroup of $T^2$. In particular, since $F^4$ admits an induced $T^2$-action itself, but has no fixed points of this action, it is clear that $\chi(F^4)=0$.  Hence $\rk(H_1(F^4))=1$ by Lemma \ref{l:pi1}, and applying Theorem \ref{cyclic}, we obtain the following result.

\begin{lemma}\label{lemmaB} Let $F^4$ be a $4$-dimensional component of the fixed point set of some circle subgroup of the $T^3$ action on $M^6$. Suppose that $F^4$ admits a $T^2$-action with only circle isotropy. Then $\pi_1(F^4)\cong\zzz$.
\end{lemma}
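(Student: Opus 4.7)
The plan is to follow the argument given for $F$ in Proposition \ref{p:fix} almost verbatim, with the input data already flagged in the paragraph preceding the lemma. Namely, $F^4$ is orientable as a connected component of the fixed point set of a circle action on the orientable manifold $M^6$, and $\chi(F^4) = 0$ since the induced $T^2$-action on $F^4$ has only circle isotropy and therefore no fixed points.

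Next, I would let $S^1 \subset T^3$ be a circle subgroup fixing $F^4$. Since $F^4$ has codimension two in $M^6$, this $S^1$-action is fixed point homogeneous with $F^4$ a maximal-dimensional fixed point component, so Theorem \ref{Spindeler} supplies a decomposition $M^6 = D(F^4) \cup_E D(N)$ in which $N$ is a smooth $S^1$-invariant submanifold containing the remaining singularities of the $S^1$-action. With this decomposition in place, Theorem \ref{cyclic} applies directly and forces $\pi_1(F^4)$ to be cyclic.

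To upgrade ``cyclic'' to $\zzz$, I would apply Lemma \ref{l:pi1} to the disk bundle decomposition above. Since $M^6$ is simply-connected and $F^4$ is orientable of codimension two, the lemma yields $\rk(H_1(F^4)) \le 1$. I would rule out $\rk(H_1(F^4)) = 0$ in the same way as in Proposition \ref{p:fix}: if $H_1(F^4)$ were finite, then on the closed orientable $4$-manifold $F^4$ Poincar\'e duality and the universal coefficient theorem would give $\chi(F^4) = 2 + b_2(F^4) \ge 2$, contradicting $\chi(F^4) = 0$. Hence $\rk(H_1(F^4)) = 1$, and combined with cyclicity this forces $\pi_1(F^4) \cong \zzz$.

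The main obstacle, such as it is, is purely bookkeeping: verifying that the chosen $S^1$ meets the hypotheses of Theorem \ref{cyclic}, in particular that the Spindeler decomposition supplies a valid codimension-two invariant submanifold $N$ capturing all the non-$F^4$ singularities of this $S^1$-action. Since Theorem \ref{Spindeler} together with the fpH structure forced on $S^1$ by $F^4$ delivers exactly this data, the lemma amounts to a repackaging of the disk bundle and cyclic-fundamental-group machinery already developed in Sections \ref{2} and \ref{3}.
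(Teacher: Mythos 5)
Your overall route matches the paper's: orientability and $\chi(F^4)=0$ from the hypotheses, the Spindeler decomposition $M^6 = D(F^4)\cup_E D(N)$, $\rk(H_1(F^4))\le 1$ from Lemma~\ref{l:pi1} upgraded to $\rk = 1$ via Poincar\'e duality, and Theorem~\ref{cyclic} to get cyclicity. The gap is precisely in the step you wave off as ``bookkeeping.'' Theorem~\ref{Spindeler} produces a smooth invariant submanifold $N$ without boundary, but it says nothing about its codimension; Lemma~\ref{codim} only guarantees $\codim(N)\ge 2$, and nothing about the fixed-point-homogeneous structure forces equality. Theorem~\ref{cyclic}, on the other hand, has $N$ of codimension two as an explicit hypothesis (its proof rotates the normal circle bundle of $N$, which simply doesn't exist in higher codimension). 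So ``Theorem~\ref{Spindeler} together with the fpH structure delivers exactly this data'' is not correct, and as written Theorem~\ref{cyclic} cannot be applied directly.

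The repair is the Mayer--Vietoris argument from Proposition~\ref{p:fix}, and it forces a reordering of your steps. First establish $\rk(H_1(F^4)) = 1$ exactly as you do, since this uses only Lemma~\ref{l:pi1}, orientability, Poincar\'e duality, and $\chi(F^4)=0$; none of this touches $N$. Now suppose $\codim(N)\ge 3$. Theorem~\ref{VanKampen}(2) gives $\pi_1(E)\cong\pi_1(N)$, so the sphere-bundle projection induces an isomorphism $H_1(E)\to H_1(N)$; feeding this into the surjection $H_1(E)\to H_1(F^4)\oplus H_1(N)$ coming from the Mayer--Vietoris sequence for the disk-bundle decomposition forces $\rk(H_1(F^4))=0$, a contradiction. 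This pins $\codim(N)=2$, and only now are the hypotheses of Theorem~\ref{cyclic} met, yielding $\pi_1(F^4)$ cyclic and hence $\cong\zzz$. In short: the rank computation must \emph{precede} the appeal to Theorem~\ref{cyclic}, not follow it, because the former is an input to verifying the codimension hypothesis of the latter.
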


 The proof of Proposition \ref{p:fix} is easily adapted to show that  the submanifold $N$ of the disk bundle decomposition of $M^6$ in Display  \ref{disc} must be $4$-dimensional and $\pi_1(N))\cong \zzz$. 
 We may then proceed as in the proof of  Theorem \ref{fph} to show that $M^6$ is equivariantly diffeomorphic to $S^3\times S^3$. In order to complete the  proof of Theorem \ref{CaseB}, it suffices to show that it is equivariantly diffeomorphic to $S^3\times S^3$ with a linear $T^3$ action. However, this follows from work of McGavran and Oh (see Section 2 of \cite{McGO}).

\subsection{Proof of Case B of Part 2 of Theorem \ref{isotropy}} \label{ss:2}

We now consider the case where there is only isolated circle isotropy, that is where the rank of the isotropy subgroups is at most one and the action is not $S^1$-fixed point homogeneous. The goal of this subsection is to prove the following theorem.

\begin{theorem}\label{ss2main} Let $T^3$ act on $M^6$, a $6$-dimensional, closed, simply-connected, non-negatively curved Riemannian manifold. Suppose that the largest isotropy subgroup of the $T^3$-action is of rank two. Then $M^6$ is equivariantly diffeomorphic to $S^3\times S^3$ with a linear $T^3$-action.
\end{theorem}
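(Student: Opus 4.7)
The plan is to show that the rank‑two isotropy hypothesis of Theorem \ref{ss2main} is incompatible with the Case (B) standing hypothesis: a rank‑two isotropy automatically produces a circle subgroup of $T^3$ acting fixed‑point‑homogeneously on $M^6$. Hence no $M^6$ satisfying both hypotheses exists, and the statement holds vacuously — equivalently, every such $M^6$ actually falls into Case (A2), and is identified with $S^3\times S^3$ with a linear $T^3$-action by Theorem \ref{CaseB}.

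First I would identify $F\cong T^2$, where $F$ is the connected component of $\Fix(M^6; T^2_0)$ through a point of rank‑two isotropy $T^2_0\subset T^3$. Since no point has rank‑three isotropy, the induced $T^1 = T^3/T^2_0$ action on $F$ is fixed‑point free, giving $\chi(F)=0$; the codimension of $F$ in the orientable manifold $M^6$ is even; $\dim F = 0$ is ruled out because a $T^1$-action on a finite set always has a fixed point; and $\dim F = 4$ is ruled out because then the slice representation of $T^2_0$ on the two‑dimensional normal fibre would factor through a character whose kernel circle acts trivially on a tubular neighborhood of $F$, contradicting effectivity of the $T^3$-action. Thus $\dim F = 2$, and since $F$ is a closed orientable surface with $\chi(F)=0$ supporting a free $T^1$-action, it is a $2$-torus.

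Next I would analyze the $T^2_0$-slice representation on $\nu_p F \cong \R^4$ at a point $p\in F$. It decomposes as $L(\chi_1)\oplus L(\chi_2)$ for characters $\chi_1,\chi_2\colon T^2_0 \to T^1$, both of which must be non‑trivial — a trivial character would enlarge the fixed set $F$. Set $K := (\ker \chi_1)^0$, a circle subgroup of $T^2_0\subset T^3$ that acts trivially on $L(\chi_1)$ by construction. If $\chi_2|_K$ were trivial, $K$ would act trivially on the entire slice and on $F$, hence trivially on a $T^3$-invariant tubular neighborhood of $F$, forcing $K$ into the ineffective kernel of the $T^3$-action — a contradiction. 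Therefore $\chi_2|_K$ is non‑trivial, the $K$-fixed subspace of the slice is exactly $L(\chi_1)\cong \R^2$, and via the slice theorem the connected component of $\Fix(M^6; K)$ through $p$ contains $F\times L(\chi_1)\cong T^2\times\R^2$; in particular it has codimension two in $M^6$.

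This makes the circle $K\subset T^3$ act fixed‑point‑homogeneously on $M^6$, directly contradicting the Case (B) hypothesis. Consequently the conditions of Theorem \ref{ss2main} are never simultaneously realized in Case (B); any actual $M^6$ with rank‑two isotropy lies in Case (A2), and Theorem \ref{CaseB} then supplies the equivariant diffeomorphism with $S^3\times S^3$ equipped with a linear $T^3$-action. The main obstacle is the slice‑representation analysis: one must rule out that the circle $K$ lies in the ineffective kernel, which comes down to observing that $K$ cannot act trivially on $L(\chi_2)$ — otherwise both characters would share the same kernel circle, and $K$ would act trivially on the entire equivariant tubular neighborhood of $F$.
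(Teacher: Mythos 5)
Your proposal does not engage with what this theorem is actually about. Theorem \ref{ss2main} is the stated goal of the subsection treating Case (B) of Part (2) of Theorem \ref{isotropy}, whose standing hypothesis is spelled out in the sentence immediately preceding it: only isolated circle isotropy, i.e.\ isotropy of rank at most one, and no circle subgroup acting fixed point homogeneously. The printed hypothesis ``largest isotropy subgroup of rank two'' is evidently a misstatement of that case (the rank-two situation is Case (A2), already handled by Theorem \ref{CaseB}). Read literally, as you do, the theorem becomes a verbatim repetition of Theorem \ref{CaseB}, and your proof consists of (i) a slice-representation argument that rank-two isotropy forces some circle $K\subset T^3$ to have a codimension-two fixed point component, and (ii) a citation of Theorem \ref{CaseB}. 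Step (i) is correct, and it is in fact the unstated justification for the nesting $T^1\subset F^2\subset F^4\subset M^6$ at the start of the paper's Case (A2); but step (ii) renders the argument circular --- you prove the statement by invoking an earlier theorem that is the same statement. Moreover, the conclusion that the result is ``vacuous'' in Case (B) misdiagnoses the situation: Case (B) is not empty (there are isometric $T^3$-actions on $S^3\times S^3$ with only isolated circle-isotropy orbits and no codimension-two fixed point set), it is simply not a rank-two case, and it is precisely the case that must be settled for the Main Theorem to be complete. Your reading would leave a genuine hole in the classification.

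What is missing is the entire content of the paper's proof of this theorem. There, the orbit space $M^*$ is a simply-connected $3$-manifold by Bredon's result, has no boundary because the singular orbits are isolated, and hence is $S^3$ by Perelman; Proposition \ref{4.5} yields at least four isolated singular $T^2$-orbits, while non-negative curvature enters through the Alexandrov-geometry Lemma \ref{l:A4} of \cite{GW} to cap the number at exactly four; finite isotropy is shown (via \cite{GGS2}) to be cyclic and non-isolated; the possible weighted graphs in $M^*$ are then analyzed, completed to an unknotted circle, and used to decompose $M^6$ as a union of two disk bundles over cohomogeneity-one $4$-manifolds, which Lemma \ref{l:pi1} and the Mayer--Vietoris sequence force to be $S^1\times S^3$, so that Corollary \ref{Wall} gives a diffeomorphism with $S^3\times S^3$; finally, equivariance and linearity are obtained by extending the $T^3$-action to a smooth $T^4$-action via the vector field construction of \cite{GW, GGS2}, together with Theorem \ref{cohomk}, Oh's theorem \cite{Oh1}, and \cite{GGK} combined with Theorem \ref{ES}. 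None of these steps --- in particular the curvature input bounding the singular orbits and the passage from diffeomorphism to equivariant diffeomorphism with a linear action --- appears in your proposal, so the case the theorem is actually needed for remains unproved.
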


The argument is a straightforward generalization of the finite isotropy case for isometric $T^2$-actions on closed, simply-connected, non-negatively curved $5$-manifolds with only isolated circle orbits that appears in \cite{GGS2}. We include it here for the sake of completeness.

First recall from Corollary 4.7 of Chapter IV of Bredon \cite{Br},  that the quotient space, $M^*$, of a cohomogeneity three $G$-action on a compact, simply-connected manifold with connected orbits
is a simply-connected $3$-manifold with or without boundary. 
Note that when there is only isolated circle isotropy for a cohomogeneity three torus action, the quotient space will not have boundary and thus, by the resolution of the Poincar\'e conjecture (see Perelman \cite{P1, P2, P3}), we have that $M^*=S^3$.

We first recall the following result from \cite{GGS2} (see Proposition 4.5 and Corollary 4.6), which gives us a lower bound for the number of isolated singular orbits of the action.
 \begin{proposition}\label{4.5}\cite{GGS2} Let $T^n$ act on $M^{n+3}$, a simply-connected, smooth manifold. Suppose that $M^*$ is homeomorphic to $S^3$ and that the rank of the largest isotropy subgroup is equal to one. Then there are at least $n+1$ isolated singular orbits $T^{n-1}$.
 \end{proposition}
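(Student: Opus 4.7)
The plan is to associate a primitive integer vector $v_i \in \pi_1(T^n) \cong \zzz^n$ to each isolated singular orbit, derive from $M^* \cong S^3$ the linear relation $\sum_i v_i = 0$, and from $\pi_1(M) = 0$ the spanning condition $\langle v_1, \dots, v_p \rangle = \zzz^n$. Together these force $p \geq n+1$, since the relation renders $v_p$ redundant and thus $v_1, \dots, v_{p-1}$ must already generate the rank-$n$ lattice.

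First, I would argue that under the stated hypotheses every singular orbit is isolated. A rank-one isotropy $T^1 \subset T^n$ acts on the four-dimensional slice by two integer weights, and its fixed subspace has real dimension $4$, $2$, or $0$. A two-dimensional fixed subspace would make the local model of the orbit space a half-space, contributing boundary to $M^*$ and contradicting $M^* \cong S^3$. Hence every singular isotropy acts freely on the slice sphere, giving finitely many isolated orbits $T^{n-1}$ with images $q_1, \dots, q_p$ in the interior of $M^*$, each labeled by a primitive vector $v_i \in \zzz^n$ corresponding to the inclusion $T^1_i \hookrightarrow T^n$.

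Next, I would examine the principal $T^n$-bundle $M_{\mathrm{reg}} \to M^*_{\mathrm{reg}} = S^3 \setminus \{q_1, \dots, q_p\}$, which is a genuine principal bundle because effective abelian actions have trivial principal isotropy. A local slice-model computation on the equivariant neighborhood $T^n \times_{T^1_i} D^4$ identifies the restriction of this bundle to a small linking sphere $S^2_i$ of $q_i$ with the pullback of the Hopf bundle $S^3 \to S^2$ along $T^1_i \hookrightarrow T^n$, so its first Chern class in $H^2(S^2_i; \zzz^n)$ is exactly $v_i$. Since $M^*_{\mathrm{reg}}$ is homotopy equivalent to a wedge of $p-1$ copies of $S^2$, its $H_2$ is generated by the linking sphere classes $[S^2_i]$ subject to $\sum_i [S^2_i] = 0$; restricting the global characteristic class to this relation yields $\sum_{i=1}^p v_i = 0$ in $\zzz^n$.

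Finally, from the long exact sequence of the fibration $T^n \to M_{\mathrm{reg}} \to M^*_{\mathrm{reg}}$, together with $\pi_1(M^*_{\mathrm{reg}}) = 0$, $\pi_2(M^*_{\mathrm{reg}}) \cong \zzz^{p-1}$, and the identification of the transgression $[S^2_i] \mapsto v_i$, one obtains $\pi_1(M_{\mathrm{reg}}) \cong \zzz^n / \langle v_1, \dots, v_p \rangle$. Since $M \setminus M_{\mathrm{reg}}$ is a disjoint union of tori $T^{n-1}$ of codimension $4$ in $M^{n+3}$, the inclusion induces $\pi_1(M_{\mathrm{reg}}) \cong \pi_1(M) = 0$, so $\{v_1, \dots, v_p\}$ generates $\zzz^n$, and the argument concludes with $p \geq n+1$. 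The main technical obstacle is justifying the identification of the transgression with the isotropy vectors $v_i$, which requires a careful match of the local Chern class on each equivariant tubular neighborhood's boundary bundle over $S^2_i$ with the associated primitive vector.
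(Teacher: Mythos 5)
Your core strategy — attach an isotropy vector $v_i \in \pi_1(T^n) \cong \zzz^n$ to each isolated singular orbit, extract a linear relation $\sum_i v_i = 0$ from $M^*\cong S^3$, and a spanning condition from $\pi_1(M)=0$ — is a legitimate way to arrive at the bound. However, there is a genuine gap: you assert that ``every singular isotropy acts freely on the slice sphere,'' and build the entire computation on the identification $M_{\mathrm{reg}} \to M^*_{\mathrm{reg}} = S^3\setminus\{q_1,\ldots,q_p\}$ being a \emph{principal} $T^n$-bundle. That assertion is false in general under the stated hypotheses. A singular isotropy circle $T^1_i$ acts on the $4$-dimensional slice $D^4\cong\mathbb{C}^2$ with coprime weights $(a,b)$; the origin is isolated and the orbit space is locally a cone on $S^3/T^1_i \cong S^2$, so $M^*\cong S^3$ is perfectly consistent, but unless $|a|=|b|=1$ the slice action has nontrivial finite isotropy $\zzz_a$ and $\zzz_b$ along the coordinate axes. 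These are exceptional orbits, which form arcs (or circles) in $M^*$ emanating from the $q_i$. Consequently the free part of the action sits over $S^3$ minus a $1$-complex, not merely $S^3$ minus $p$ points, and your computations of $\pi_1$ and $H_2$ of the base, the generators of $H_2$ by linking $2$-spheres, and hence the identification of the transgression, all break down. The hypothesis ``rank of the largest isotropy subgroup equals one'' does not exclude finite isotropy, so this case cannot be waved away.

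The paper's cited proof in \cite{GGS2} sidesteps exactly this issue and is also considerably simpler. Instead of working over the free stratum, it works with $M_{\mathrm{reg}}$ defined as the union of \emph{all orbits with finite isotropy} (principal and exceptional alike). Its image in $M^*$ is genuinely $S^3\setminus\{q_1,\ldots,q_p\}$, and because all orbits there have the same dimension $n$, the orbit map is a proper submersion with compact connected fibers $T^n/\Gamma\cong T^n$, hence an Ehresmann fiber bundle with fiber homotopy equivalent to $T^n$ (though not a principal bundle). Removing the singular orbits, which have codimension $4$, gives $\pi_1(M_{\mathrm{reg}})\cong\pi_1(M)=0$ and $\pi_2(M_{\mathrm{reg}})\cong\pi_2(M)$. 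The homotopy exact sequence of that fibration then reads
\[
0\to\pi_2(M_{\mathrm{reg}})\to\pi_2(M^*_{\mathrm{reg}})\to\pi_1(T^n)\to\pi_1(M_{\mathrm{reg}})=0,
\]
so $\zzz^n$ is a quotient of $\pi_2(M^*_{\mathrm{reg}})\cong\zzz^{p-1}$ and $p\geq n+1$ follows immediately, with no need to identify the connecting map with the isotropy vectors $v_i$ or to establish $\sum v_i=0$. In summary: your plan is recoverable if you replace the free stratum by the finite-isotropy stratum and correspondingly replace the principal-bundle/Chern-class bookkeeping by the bare rank count in the exact sequence; as written it silently assumes the absence of exceptional orbits, which does not follow from the hypotheses.
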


 The non-negative curvature hypothesis gives us an upper bound on the number of isolated $T^2$ orbits. 
The  following lemma from \cite{GW} is crucial:
\begin{lemma}\label{l:A4}\cite{GW} A three dimensional non-negatively curved Alexandrov space $X^3$ has at most four points for which the space of directions is not larger than $S^2(1/2)$.
\end{lemma}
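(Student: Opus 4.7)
The plan is to argue by contradiction. I will interpret the hypothesis ``$\Sigma_p X$ is not larger than $S^2(1/2)$'' as $\mathrm{diam}(\Sigma_p X) \le \pi/2$ (the natural meaning for a 2-dimensional Alexandrov space of curvature $\ge 1$, since $S^2(1/2)$ has diameter $\pi/2$); equivalently, any two geodesics issuing from $p$ make an angle at most $\pi/2$ at $p$. Suppose, for contradiction, that there are five such points $p_0, p_1, p_2, p_3, p_4 \in X^3$.

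First, I would apply Toponogov's comparison theorem to each of the $\binom{5}{3}=10$ geodesic triangles $\Delta(p_i,p_j,p_k)$. Since each angle at $p_i$ is bounded above by $\pi/2$, the Euclidean comparison angle $\tilde{\angle}_{p_i} \le \angle_{p_i} \le \pi/2$, so every comparison triangle is non-obtuse; in particular, for each ordered triple,
\begin{equation*}
d(p_j, p_k)^2 \;\le\; d(p_i, p_j)^2 + d(p_i, p_k)^2.
\end{equation*}
Next, for each fixed apex $p_i$ I would study the four directions $\xi_j = \uparrow_{p_i}^{p_j} \in \Sigma_{p_i} X$, which form a 4-point configuration of pairwise distance at most $\pi/2$ inside the 2-dimensional curvature-$\ge 1$ space $\Sigma_{p_i} X$. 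A secondary Toponogov comparison inside $\Sigma_{p_i}X$, together with the global distance inequalities above, should give very restrictive constraints on the mutual positions of the $p_j$ as seen from $p_i$.

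The main obstacle is the final assembly step: turning these ten pairwise acute/right-triangle conditions (and the five local four-direction constraints) into a genuine contradiction in a 3-dimensional space. I expect the cleanest route to be a packing/volume argument: each point $p_i$ with $\mathrm{diam}(\Sigma_{p_i}X)\le\pi/2$ admits a small metric ball that is nearly isometric to a cone of solid-angle at most half of the round $S^2(1)$, so five disjoint such balls would overfill the volume available under Bishop--Gromov relative to Euclidean $\mathbb{R}^3$. Alternatively, one can pursue a multi-point Toponogov (in the spirit of Wilking's quadrangle comparison or Perelman's multi-variable critical point theory) to partially isometrically embed the five-point configuration into some $\mathbb{R}^k$ and then derive a dimensional incompatibility with $\dim X^3=3$ via a gradient-flow/Lipschitz retraction argument. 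Either approach trades the combinatorial bound ``5 acute triangles in dimension 3'' for the cleaner dimensional bound $4 = 3+1$, which is precisely where the number $4$ in the statement originates. The delicate combinatorial lemma completing this reduction is, I anticipate, the heart of Grove and Wilking's original argument.
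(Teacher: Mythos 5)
The paper does not prove this lemma; it is cited directly from Grove--Wilking \cite{GW}, so there is no in-text proof to compare against. Evaluated on its own terms, your proposal sets up the right framework but is, as you yourself note, not a proof: the ``delicate combinatorial lemma'' you defer to Grove and Wilking is precisely the content of the statement, so what you have written is a reformulation of the problem rather than a solution.

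More concretely, the Toponogov step is correct but not nearly strong enough. Having all $\binom{5}{3}$ comparison triangles non-obtuse does not force a contradiction in three dimensions: by the Danzer--Gr\"unbaum theorem, $\mathbb{R}^3$ contains up to $2^3=8$ points with every comparison angle at most $\pi/2$ (the vertices of a box), so embedding a five-point non-obtuse configuration into $\mathbb{R}^3$ is perfectly possible and yields no dimensional incompatibility. Your ``$4=3+1$'' heuristic is also misleading: in the two-dimensional analogue the bound is again $4$, not $2+1=3$ (realized, for instance, by the flat pillowcase with its four cone points of angle $\pi$, and proved sharp via Gauss--Bonnet), so the count is not of the form $n+1$ and cannot come from a simple orthobasis/simplex embedding argument.

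Your two proposed routes to close the gap both have problems. The Bishop--Gromov packing argument does not work because Bishop--Gromov controls the volume of a single metric ball relative to the Euclidean model; it does not impose any constraint on how many small ``narrow-cone'' balls can coexist at mutual distance in a three-dimensional nonnegatively curved space (there is no Gauss--Bonnet-type global accounting of the angular deficit $4\pi-\mathrm{vol}(\Sigma_p)$ in dimension $3$). The Wilking-quadrangle/Perelman route is vague and, as noted, partial isometric embedding into $\mathbb{R}^3$ of the five-point data is not contradictory by itself. What is missing is the genuinely three-dimensional Alexandrov-geometric input that Grove and Wilking supply, which goes beyond pairwise non-obtuseness of comparison triangles.
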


Proposition 4.8 in \cite{GGS2} 
shows that if there is finite isotropy, it must be $\zzz_2\oplus \zzz_2$ or $\zzz_k$ and that in the latter case, those exceptional orbits are not isolated.
 Combining Proposition \ref{4.5} and Lemma \ref{l:A4} it follows that there are exactly $4$ isolated $T^2$ orbits. This result combined with the proof of Proposition 5.8 in \cite{GGS2} then tells us that $\zzz_2\oplus \zzz_2$ isotropy cannot occur.

We may summarize our results as follows. 

\begin{proposition}
Let $T^3$ act isometrically and effectively on $M^6$, a $6$-dimensional, closed, simply-connected Riemannian manifold as in Theorem \ref{ss2main}. Suppose that $M^6/T^3=M^*=S^3$. Then there are exactly $4$ isolated $T^2$ orbits 
and if there is finite isotropy, then it must be cyclic and the corresponding orbits are not isolated.
\end{proposition}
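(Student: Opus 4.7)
The plan is to assemble the proposition from three ingredients already gathered in the section: the lower bound on the number of isolated $T^2$-orbits coming from Proposition \ref{4.5}, the Alexandrov upper bound of Lemma \ref{l:A4}, and the structural results on finite isotropy imported from \cite{GGS2}. Since we are working under the standing hypotheses of Theorem \ref{ss2main}, the rank of every isotropy subgroup is at most one; an isolated singular orbit is therefore necessarily an orbit with $T^2$-isotropy, that is, a circle orbit projecting to an isolated singular point of $M^*=S^3$.

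First, I would directly invoke Proposition \ref{4.5} with $n=3$: since $M^6$ is simply-connected, $M^*\cong S^3$, and the largest isotropy has rank one, there are at least $n+1=4$ isolated $T^2$-orbits. Next, I would use the fact that $M^*=M^6/T^3$ inherits the structure of a $3$-dimensional, non-negatively curved Alexandrov space. At a point $p^*\in M^*$ coming from an orbit with $T^2$-isotropy, the space of directions $\Sigma_{p^*}M^*$ is the quotient of $S^2$ by a linear isometric action of the slice representation, and in particular has diameter at most $\pi/2$ (so that $\Sigma_{p^*}M^*$ is not larger than $S^2(1/2)$). Lemma \ref{l:A4} then bounds the number of such points by four. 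Combining the two bounds gives exactly four isolated $T^2$-orbits, settling the first claim.

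For the second claim, I would cite Proposition 4.8 of \cite{GGS2}, whose proof goes through verbatim in our setting since it only uses cohomogeneity-three torus actions with $M^*\cong S^3$ and rank-one isotropy: any finite isotropy group must be either $\zzz_2\oplus\zzz_2$ or cyclic $\zzz_k$, and in the cyclic case the corresponding exceptional orbits come in one-parameter families and hence are not isolated. It then remains to exclude $\zzz_2\oplus\zzz_2$ isotropy. Here I would appeal to the argument of Proposition 5.8 of \cite{GGS2}, which uses the structure of the weighted orbit space (now known to have exactly four isolated $T^2$-orbits) together with the non-negative curvature of $M^6$ to derive a contradiction from the existence of a $\zzz_2\oplus\zzz_2$ isotropy; this step is where I expect the main technical content to sit, since it requires translating the $5$-dimensional analysis of \cite{GGS2} to our $6$-dimensional cohomogeneity-three setting. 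Putting these pieces together yields the proposition.
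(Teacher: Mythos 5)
Your assembly of the proposition from Proposition~\ref{4.5}, Lemma~\ref{l:A4}, and Propositions~4.8 and~5.8 of \cite{GGS2} is exactly the route the paper takes; this proposition is essentially a summary of the paragraphs that precede it, and you have identified the right ingredients in the right order. However, two details in your write-up are off. First, you have the isotropy and orbit types backwards: under the standing hypothesis that all isotropy subgroups have rank at most one (which, note, is what the section introduction says, even though the statement of Theorem~\ref{ss2main} misprints ``rank two''), the isolated singular orbits have $T^1$-isotropy and are hence $2$-tori $T^3/T^1\cong T^2$ projecting to isolated points of $M^*$ --- not ``orbits with $T^2$-isotropy, that is, circle orbits'' as you wrote; your own phrase ``$4$ isolated $T^2$-orbits'' later in the same paragraph is the correct usage. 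Second, your description of the space of directions at such a point $p^*\in M^*$ is not right: a $T^2$-orbit in $M^6$ has a $4$-dimensional normal slice on which the isotropy $T^1$ acts, so $\Sigma_{p^*}M^*\cong S^3/T^1$ (a weighted $2$-sphere), not ``the quotient of $S^2$ by the slice representation.'' The conclusion you draw --- that $\Sigma_{p^*}M^*$ is not larger than $S^2(1/2)$, so that Lemma~\ref{l:A4} applies --- is correct, but the justification should run through $S^3/T^1$. Neither slip derails the argument, since the structure of the proof and the citations are the ones the paper uses, but both should be fixed.
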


We consider first the case where there is no finite isotropy. We have the following result.
\begin{proposition} Let $T^3$ act isometrically and effectively on $M^6$, a $6$-dimensional, closed, simply-connected Riemannian manifold.  Suppose that $M^6/T^3=M^*=S^3$.  If there is no finite isotropy, then $M^6$ is diffeomorphic to $S^3\times S^3$.
\end{proposition}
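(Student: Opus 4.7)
The strategy is to compute the integral homology of $M^6$ and then invoke Corollary~\ref{Wall}.

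I would first combine the preceding results of the subsection with the Case~(B) hypothesis to pin down the singular structure: by Proposition~\ref{4.5} and Lemma~\ref{l:A4} there are exactly four isolated $T^2$-orbits $O_1,\dots,O_4$ with respective isotropy circles $K_1,\dots,K_4\subset T^3$; moreover, if some circle $K\subset T^3$ had a $4$-dimensional component of fixed points it would act fixed-point homogeneously on $M^6$, contrary to the Case~(B) assumption. Since finite isotropy is also excluded by hypothesis, the action is free away from the four singular orbits.

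Next, I would pick $T^3$-invariant closed tubular neighborhoods $U_i$ of $O_i$ and write $M^6 = Y\cup\bigsqcup_i U_i$. The slice representation at $O_i$ is a $K_i$-representation on $\ccc^2$; the no-finite-isotropy hypothesis forces both weights to be primitive, so, after identification, $U_i\cong T^2\times D^4$ with $T^3$ acting by translation on the $T^2$-factor and diagonally via $K_i$ on $D^4$. The boundary $\partial U_i\cong T^2\times S^3$ carries the Hopf action of $K_i$ on the $S^3$-factor. The complement $Y$ is a principal $T^3$-bundle over the simply-connected $3$-manifold with boundary $Y^*:=S^3\setminus\bigsqcup_i\Int(D^3_i)$, which is homotopy equivalent to $\bigvee_{i=1}^{3} S^2$.

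Then I would run Mayer--Vietoris on $M^6=Y\cup\bigsqcup_i U_i$ to compute $H_*(M^6)$. The homology of $U_i\simeq T^2$ and of $\partial U_i\cong T^2\times S^3$ is immediate from K\"unneth; the homology of $Y$ follows from the Serre spectral sequence of $T^3\to Y\to Y^*$, whose $d_2$-differentials are encoded in the first Chern classes of the three circle subbundles. These Chern classes are in turn determined by the isotropy vectors $[K_i]\in\pi_1(T^3)\cong\zzz^3$, constrained by the relation $\sum_i[K_i]=0$ coming from the extension of the bundle across the four removed $3$-disks of $S^3$. A comparison with the gluing maps of the $\partial U_i$ into $Y$ should then show that $H_*(M^6)\cong H_*(S^3\times S^3)$.

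Finally, Corollary~\ref{Wall} concludes that $M^6$ is diffeomorphic to $S^3\times S^3$. The main obstacle I anticipate is the bookkeeping for the spectral-sequence and Mayer--Vietoris computations in the third step: one must verify that the primitivity and zero-sum conditions on $[K_1],\dots,[K_4]$ force the global homology to coincide with that of $S^3\times S^3$ regardless of the specific configuration of the isotropy circles, so that no additional curvature input is needed beyond what has already been used to rule out $4$-dimensional fixed sets.
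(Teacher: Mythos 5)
Your proposal selects a valid-looking strategy but leaves the essential step undone: you decompose $M^6 = Y\cup\bigsqcup_i U_i$, identify $Y$ as a principal $T^3$-bundle over $S^3$ minus four balls, and then \emph{anticipate} that a Mayer--Vietoris and Serre spectral-sequence bookkeeping will force $H_*(M^6)\cong H_*(S^3\times S^3)$ ``regardless of the specific configuration of the isotropy circles.'' That last assertion is exactly the content of the proposition, and you flag it yourself as the open obstacle; as written the argument does not establish it. Moreover, the claimed relation $\sum_i[K_i]=0$ is not self-evident and would need justification before it can be fed into the spectral-sequence computation.

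The paper's own proof avoids all of this. It invokes the proof of Proposition~\ref{4.5} (from \cite{GGS2}) to get $\pi_2(M^6)=0$, and together with $\pi_1(M^6)=0$ and the Hurewicz theorem this gives $H_1(M^6)=H_2(M^6)=0$. Poincar\'e duality and Universal Coefficients then show $H_4(M^6)=H_5(M^6)=0$ and $H_3(M^6)$ is free, so the only unknown is $b_3$. Finally, since $T^3$ acts with no fixed points, $\chi(M^6)=0$; because $\chi(M^6)=2-b_3$, this forces $b_3=2$, so $M^6$ has the homology of $S^3\times S^3$ and one applies Corollary~\ref{Wall}. The key ingredients you are missing are the use of $\pi_2(M^6)=0$ (already supplied by the proof of Proposition~\ref{4.5}) and the elementary observation that a fixed-point-free torus action forces $\chi=0$; together these collapse your intricate decomposition calculation to one line. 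Your route also imports the curvature-dependent count of exactly four singular orbits (Lemma~\ref{l:A4}), which the paper's argument does not need for this particular proposition.
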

\begin{proof} 
The proof of Proposition \ref{4.5} shows that $\pi_2(M^6)=0$. By the Hurewicz isomorphism, it follows that $M^6$ only has homology in dimension $3$ and by the Universal Coefficients there is no torsion.
Since the fixed point set of the $T^3$-action is empty by hypothesis, it follows that $\chi(M^6)$=0. This tells us that $b_3(M^6)=2$ and thus $M^6$ has the homology groups of $S^3\times S^3$, so by Corollary \ref{Wall}, it follows that $M^6$ is diffeomorphic to $S^3\times S^3$.
\end{proof}
 
We now consider the case where  the $T^3$-action on $M^6$ has non-trivial finite isotropy.  There are just
five admissible graphs corresponding to this case (see Figure \ref{F:graphs}).

\begin{figure}
\psfrag{1}{(1)}
\psfrag{2}{(2)}
\psfrag{3}{(3)}
\psfrag{4}{(4)}
\psfrag{5}{(5)}
\centering
\includegraphics[scale=0.5]{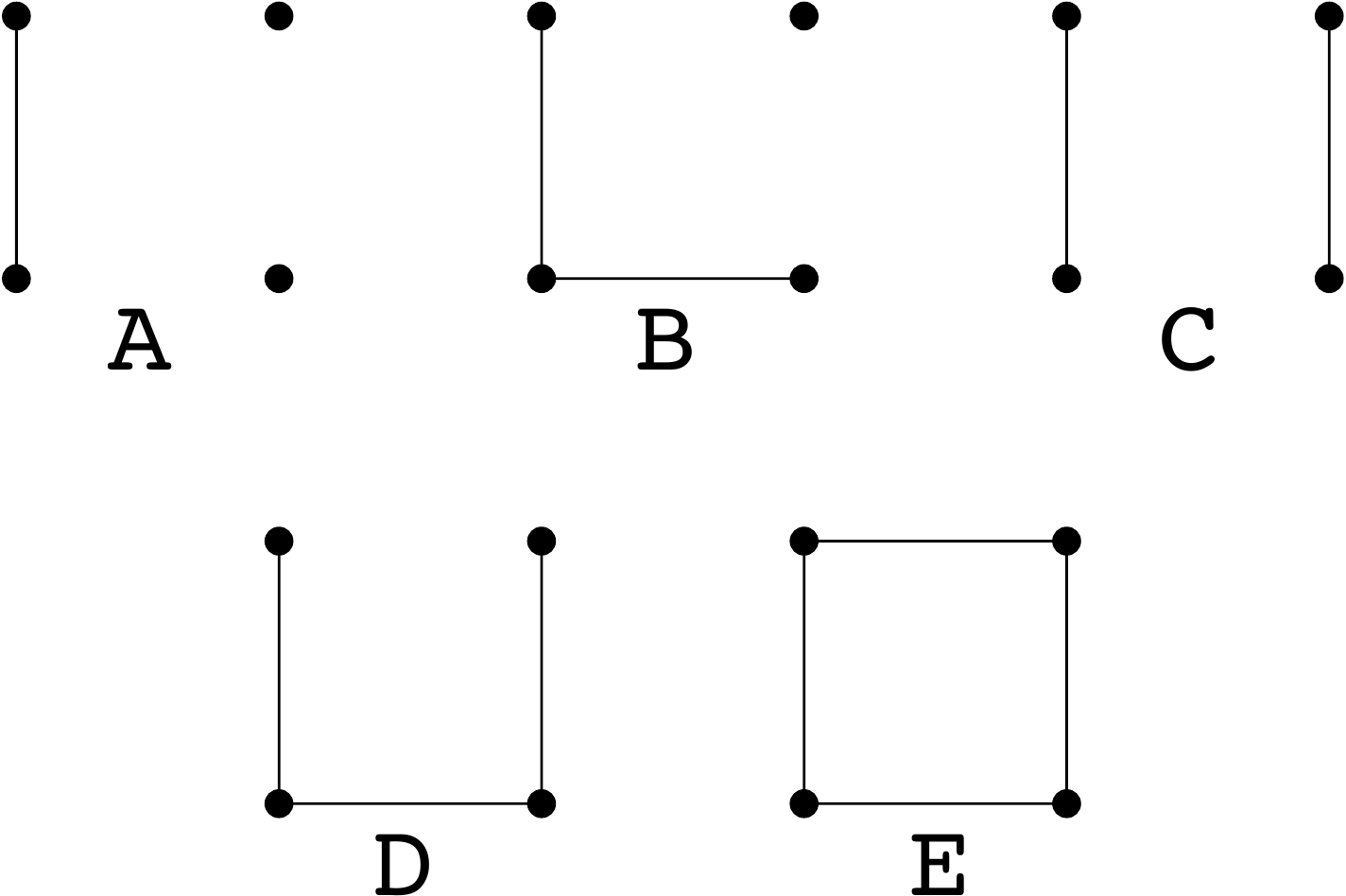}
\caption{Possible weighted graphs when there is finite cyclic isotropy.}
\label{F:graphs}
\end{figure}
In the special case where the singular set in the orbit space contains a circle we have the following result which follows directly from work of  \cite{GW} and its generalization in \cite{GGS2}.

\begin{theorem} \label{T:unknotted_circle}
Let $M^6$ be a closed, simply-connected, non-negatively curved $6$-manifold with an isometric $T^3$-action and orbit space  $M^*\simeq S^3$. If the singular set in the orbit space $M^*$ contains a circle $K^1$, then the following hold:
\begin{itemize}
\item[(1)]  The circle $K^1$ is the only circle in the singular set in $M^*$.
\item[(2)]  $K^1$ comprises all of the singular set, i.e., $M^*\setminus K^1$ is smooth.
\item[(3)]  The circle $K^1$ is unknotted in $M^*$.
 \end{itemize}
\end{theorem}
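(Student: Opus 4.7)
The plan is to adapt the arguments of \cite[Section 3]{GW} and \cite[Section 5]{GGS2}, which establish the analogous statements for $T^n$-actions on closed, simply-connected, non-negatively curved $(n+2)$- and $(n+3)$-manifolds, to the present $6$-dimensional setting. The setup is essentially the same: $M^*\simeq S^3$ is a simply-connected non-negatively curved Alexandrov $3$-space, and the singular set $\Sigma\subset M^*$ is a graph whose vertices are the four isolated $T^2$-orbit images (which, by Lemma \ref{l:A4} and the preceding analysis, exhaust the allowed ``defect'' points) and whose edges are strata of finite cyclic ($\zzz_k$-)isotropy. The hypothesized circle $K^1\subset \Sigma$ is a concatenation of such edges passing through some of the vertices.

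For (1) and (2) I would exploit the Alexandrov geometry of $M^*$ transverse to $K^1$. Each edge of $\Sigma$ is totally geodesic in $M^*$, since it is the image of the fixed-point set of a finite cyclic subgroup of $T^3$ acting on $M^6$, which is itself a closed totally geodesic submanifold. Along an interior edge point the space of directions is strictly larger than $S^2(1/2)$, while at each of the four vertices it is exactly $S^2(1/2)$. Applying the Sharafutdinov retraction to the complement of a small open tubular neighborhood of $K^1$ in $M^*$, as in \cite[Sec.~3]{GW} and \cite[Sec.~5]{GGS2}, shows that any additional edge or cycle of $\Sigma$ must either retract onto $K^1$ or contribute a fifth ``defect'' point in the space-of-directions count, which is prohibited by Lemma \ref{l:A4}. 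This forces $\Sigma=K^1$ and precludes any second singular circle.

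For (3) I would observe that, once $\Sigma=K^1$, the complement $M^*\setminus K^1$ is a simply-connected, smooth, non-negatively curved open Alexandrov $3$-manifold whose end is modeled on $T^2$; by the soul theorem in dimension three it must be diffeomorphic to an open solid torus. Together with a closed tubular neighborhood of $K^1$ in $M^*$, this exhibits $M^*\cong S^3$ as a genus-one Heegaard splitting with core $K^1$, and the core of any genus-one Heegaard splitting of $S^3$ is unknotted by classical $3$-manifold theory.

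The principal obstacle, exactly as in \cite{GW} and \cite{GGS2}, is the Alexandrov-geometric bookkeeping required to eliminate every alternative configuration of the singular graph using only the local space-of-directions constraints and non-negative curvature of $M^*$; once that step is carried out the topological conclusions (1)--(3) follow in a routine manner.
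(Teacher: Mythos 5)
Your outline for parts (1) and (2) is in the spirit of \cite{GW} and \cite{GGS2}, which is also where the paper sends the reader; the Alexandrov bookkeeping you describe (totally geodesic singular strata, the four points saturating Lemma \ref{l:A4}, Sharafutdinov retraction onto the soul) is the right toolkit even if you leave the details unchecked. But your argument for part (3) has a genuine gap and, in addition, goes a different route from the paper. First, the claim that $M^*\setminus K^1$ is simply-connected is false: the complement of \emph{any} knot in $S^3$ has nontrivial fundamental group (at least $\zzz$), and indeed your own desired conclusion --- that the complement is an open solid torus --- has $\pi_1\cong\zzz$, contradicting the simple-connectivity assertion two sentences earlier. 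Second, $M^*\setminus K^1$ is not a complete metric space, so the Cheeger--Gromoll soul theorem does not apply directly; one would have to work instead with the compact Alexandrov space obtained by removing an open tubular neighborhood of $K^1$ and invoke a soul-with-boundary argument (as Grove--Wilking actually do elsewhere), a step you do not address. Third, even granting a soul-with-boundary statement, concluding that the soul is a circle rather than, say, a point or a $2$-sphere requires a further argument exploiting the torus end, which is also absent.

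The paper does not use the soul theorem here. Following \cite{GW} and its dimension-$5$ generalization in \cite{GGS2}, it constructs a gradient-like vector field $V^*$ on $M^*$ whose flow lines emanating from each interior point of one arc of $K^1$ terminate at a point of the opposite arc, sweeping out a one-parameter family of embedded $2$-spheres (degenerating to single flow lines at the four vertices). This family foliates $M^*\setminus K^1$ by $2$-spheres away from the two ``equatorial'' arcs, which exhibits $M^*\cong S^3$ as an unknotted circle together with a standard complementary solid torus, and simultaneously produces the disk-bundle decomposition of $M^6$ and the extra circle symmetry used in the rest of Section~\ref{ss:2}. That explicit vector-field construction is what makes the unknottedness conclusion effective, and it is the step your soul-theoretic substitute does not reproduce. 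If you want to pursue the soul approach, you would need to (a) drop the simple-connectivity claim, (b) state and prove a soul theorem for the compact non-negatively curved Alexandrov $3$-space with torus boundary obtained by deleting a tubular neighborhood of $K^1$, and (c) rule out souls other than a circle; that is a nontrivial amount of additional work, whereas the vector-field argument of \cite{GW} is already available.
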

 We will now show in all cases where we have a circle that we may decompose the 
manifold as a union of disk bundles, where at least one of the disk bundles is over one arc of the circle.

\begin{proposition} Let $T^3$ act on $M^6$ isometrically and effectively and suppose that $M^*=S^3$ and there is finite isotropy. Suppose further that the singular set in $S^3$ corresponds to graph $E$ in figure \ref{F:graphs}. Then we may decompose $M^6$ as a union of disk bundles over two disjoint $4$-dimensional submanifolds fixed by finite isotropy (although not necessarily the same group). 
\end{proposition}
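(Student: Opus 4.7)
The plan is to exploit the unknotted structure of the singular circle $K^1 \subset M^* \cong S^3$ from Theorem \ref{T:unknotted_circle} together with the combinatorial weighting prescribed by graph $E$ to construct the decomposition directly.

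First I would fix the setup. By Theorem \ref{T:unknotted_circle}, the singular set in the orbit space is the unknotted circle $K^1$, and the four isolated $T^2$-orbits project to four vertices on $K^1$ that divide it into four weighted arcs, each with finite cyclic isotropy. The defining feature of graph $E$ (visible in Figure \ref{F:graphs}) is that there exist two non-adjacent arcs $A_1, A_2$ of $K^1$ sharing no vertex. I would choose such a pair and write $\Gamma_1, \Gamma_2 \subset T^3$ for the finite cyclic isotropies along $A_1$ and $A_2$ respectively.

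Next, set $F_i := \pi^{-1}(\overline{A_i})$, where $\pi \colon M^6 \to M^*$ denotes the orbit map. Each closed arc $\overline{A_i}$ carries constant finite isotropy $\Gamma_i$ along its interior and $T^2$-isotropy at each of its two endpoints, so $F_i$ is a compact subspace of $M^6$ generically fibered over $A_i$ by the $3$-dimensional orbit $T^3/\Gamma_i$, with the fiber collapsing to a $1$-dimensional singular orbit (isotropy $T^2$) at each endpoint. The slice theorem applied to the $T^2$-representation on the $5$-dimensional normal slice at each endpoint shows that this local structure assembles into a smooth closed $4$-dimensional submanifold $F_i \subset M^6$, fixed pointwise by $\Gamma_i$. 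Because $A_1$ and $A_2$ share no vertex, $F_1 \cap F_2 = \emptyset$, and the two isotropy groups need not agree.

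For the disk bundle decomposition, I would use the unknottedness of $K^1$ to choose an embedded $2$-sphere $\Sigma \subset S^3$ that separates $A_1$ from $A_2$, meeting $K^1$ transversally in two points, one on each of the two arcs complementary to $A_1 \cup A_2$. Then $\Sigma$ splits $S^3$ into two closed $3$-balls $B_1, B_2$ with $A_i \subset \operatorname{int}(B_i)$, and $B_i$ deformation retracts onto $\overline{A_i}$. Invoking the slice theorem along $F_i$ identifies $\pi^{-1}(B_i)$ with the total space of a rank-$2$ equivariant disk bundle $D(F_i) \to F_i$, whence
\[
M^6 = D(F_1) \cup_E D(F_2), \qquad E = \pi^{-1}(\Sigma),
\]
as required. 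The main obstacle is the orbit-space step: producing the separating sphere $\Sigma$ that simultaneously isolates the two non-adjacent arcs into disjoint $3$-balls and whose preimage serves as the common boundary of two honest equivariant disk bundles. This step uses the unknottedness of $K^1$ supplied by Theorem \ref{T:unknotted_circle} in an essential way, and also requires checking that the combinatorial weighting of graph $E$ permits the choice of non-adjacent arcs whose preimages stay disjoint; once $\Sigma$ is in place, identifying $\pi^{-1}(B_i)$ with an equivariant tubular neighborhood of $F_i$ is a routine consequence of the slice theorem.
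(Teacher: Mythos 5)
Your overall scheme---pick two opposite (non-adjacent) arcs $A_1,A_2$ of the singular circle $K^1$, set $F_i=\pi^{-1}(\overline{A_i})$, observe these are disjoint closed $4$-manifolds each fixed by the finite cyclic isotropy along its arc, and then split $M^6$ over a separating $2$-sphere---is the right picture, and it matches the decomposition the paper has in mind. But I think you have misidentified where the work actually lies.

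Producing a $2$-sphere $\Sigma\subset S^3$ that separates $A_1$ from $A_2$ is easy; unknottedness of $K^1$ (Theorem \ref{T:unknotted_circle}) hands it to you. The genuine gap is the last sentence, where you assert that ``identifying $\pi^{-1}(B_i)$ with an equivariant tubular neighborhood of $F_i$ is a routine consequence of the slice theorem.'' It is not. Since graph $E$ is the full circle, $\Sigma$ must cross $K^1$ transversally at two interior points of the two complementary arcs, so $B_i\cap K^1$ is strictly larger than $\overline{A_i}$: it contains nondegenerate pieces of the adjacent edges, whose finite isotropy groups are generally different from $\Gamma_i$ and need not be subgroups of $\Gamma_i$. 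Consequently $\pi^{-1}(B_i)$ contains singular orbits that do not sit in the normal $D^2$-fibres of a linear tube around $F_i$, and the slice theorem, being a local statement, gives no control over how these pieces assemble globally. One must actually show that the preimage of the ball is diffeomorphic to the total space of the normal disk bundle of $F_i$, and that is precisely the content of the gradient-like vector field construction of Grove--Wilking \cite{GW}, adapted in \cite{GGS2} (Propositions 6.7(2) and 6.9), which the paper cites as the proof of this proposition. That argument uses the non-negative curvature hypothesis in an essential way; your sketch uses curvature nowhere, which is a warning sign that the final identification cannot be ``routine.'' To close the gap you should either carry out the flow construction on $M^*$ foliating $S^3\setminus(\overline{A_1}\cup\overline{A_2})$ by $2$-spheres and lift it, or cite that construction explicitly rather than appealing to the slice theorem.
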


The proof of this proposition is exactly the same as in \cite{GGS2} (see the proofs of Proposition 6.9 and Proposition 6.7(2)).
\begin{figure}
\centering
\includegraphics[scale=0.5]{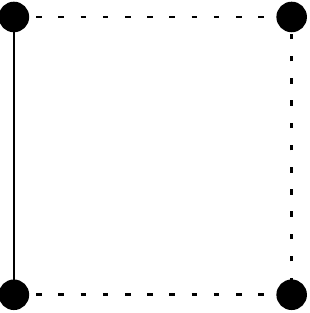}
\caption{How to complete a weighted graph with edges corresponding to principal orbits to obtain a circle: the solid edge corresponds to orbits with finite cyclic isotropy, while the dotted edges correspond to principal orbits.}
\label{F:4_vertices_completed_arc}
\end{figure}
For graphs (A) through (D),  we may complete the weighted graph by joining disjoint isolated circle orbits or arcs via edges corresponding to shortest geodesics consisting of regular points in the orbit space. In this way we obtain a graph that is an unknotted circle (see figure~\ref{F:4_vertices_completed_arc}) and now for all the possible graphs we may decompose $M^6$ as the union of two disk bundles over 
the $4$-dimensional manifolds that correspond to opposite arcs of the circle. These $4$-dimensional manifolds are invariant under the $T^3$-action and via the classification of torus actions of cohomogeneity one (see \cite{Pa, Pak}), it follows that they are $T^1\times M^3$, where $M^3$ is an orientable, cohomogeneity one manifold equal to one of $S^3$, $\lpq$, $S^2\times S^1$ by \cite{M} and  \cite{N}.  By Lemma  \ref{l:pi1}, it follows that the $4$-dimensional manifold 
may be one of $S^1\times S^3$ or $S^1\times \lpq$. 

As in Case (1) of Part 2 of Theorem \ref{isotropy}, 
analyzing the Mayer-Vietoris sequence of the decomposition it is immediate that the $4$-dimensional manifolds
corresponding to opposite arcs for all the graphs must be $S^1\times S^3$ and $M^6$ has the homology groups of $S^3\times S^3$.  Applying Corollary \ref{Wall}, it follows that 
$M^6$ is diffeomorphic to $S^3\times S^3$.

It remains to show that the classification is up to equivariant diffeomorphism in both cases. The argument  in the proof of Proposition \ref{T:unknotted_circle} (see \cite{GW} and \cite{GGS2}) uses the construction of a vector field $V^*$ on $M^*$. We construct $V^*$  so that the flow lines emanating from each point of one edge will meet at a point of the other edge to form a $2$-sphere, unless the points are vertices of the rectangle, in which case there is only one flow line. Moreover, there is an $S^1$-action on $M^*=S^3$ preserving these spheres with orbit space a $2$-dimensional rectangle. This action clearly lifts to an action on $M$ whose orbits near the two $4$-dimensional submanifolds are the normal circles in a tubular neighborhood. It follows that this lift commutes with the given isometric $T^3$-action on $M^6$. Thus the $T^3$-action on $M^6$ extends to a smooth $T^4$-action.

\vspace{5mm}

The following theorem will allow us to apply Theorem 2.5 in Oh \cite{Oh1} which states the following.  If the matrix of the circle isotropy subgroups of a $T^4$-action on $M^6$ has determinant $\pm1$, then $M^6$ is equivariantly diffeomorphic to $S^3\times S^3$.

\begin{theorem}\label{cohomk} Let $T^{n-k}$ act effectively on $M^n$ such that $M^n/T^{n-k}=D^k$, $k\geq 2$. Further assume that all singular isotropy is connected, all singular orbits correspond to boundary points and that there are no exceptional orbits.  Then $M^n$ is simply-connected if and only if
there are $(n-k)$ distinct circle isotropy groups whose matrix has determinant $\pm1$.
\end{theorem}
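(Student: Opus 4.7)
The plan is to compute $\pi_1(M^n)$ directly from the orbit data and then translate simple-connectivity into a unimodularity statement on the isotropy weight matrix.

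First, I would observe that the orbit map $\pi\colon M^n \to D^k$ restricts to a principal $T^{n-k}$-bundle over the open interior $\Int(D^k)$, since by hypothesis there are no exceptional orbits and every singular orbit lies over $\partial D^k$. Because $k\geq 2$, the base $\Int(D^k)\simeq \rrr^k$ is simply-connected, so the long exact homotopy sequence of this bundle yields a surjection $\zzz^{n-k}=\pi_1(T^{n-k}) \twoheadrightarrow \pi_1(\pi^{-1}(\Int D^k))$. A van Kampen argument combining $\pi^{-1}(\Int D^k)$ with equivariant tubular neighborhoods of the singular orbits (each of which is simply-connected, as the base is a simply-connected disk neighborhood) then gives a surjection $\zzz^{n-k}\twoheadrightarrow \pi_1(M^n)$. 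In particular $\pi_1(M^n)$ is a finitely generated abelian group.

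Next, I would identify the kernel. For each circle isotropy $S^1_\alpha\subset T^{n-k}$ with primitive weight vector $\alpha\in\zzz^{n-k}$ appearing along a codimension-one stratum of $\partial D^k$, the slice theorem at a point of the corresponding singular orbit exhibits the generator of $S^1_\alpha$, regarded as a loop in a nearby principal orbit, as the boundary of a $2$-disk in $M^n$ (namely the slice disk on which $S^1_\alpha$ acts by rotation). Hence each such $\alpha$ maps to $0$ in $\pi_1(M^n)$. Reconstructing $M^n$ from $D^k$ stratum by stratum, one then checks that these are the only relations: any higher-rank connected isotropy $T^s\subset T^{n-k}$ (with $s\geq 2$) arising at a lower-dimensional stratum of $\partial D^k$ is generated, as a subtorus of $T^{n-k}$, by the circle isotropies of the adjacent codimension-one strata, so contributes no independent relation. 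This yields the identification
\begin{equation*}
\pi_1(M^n)\;\cong\;\zzz^{n-k}\big/\langle \alpha_1,\ldots,\alpha_m\rangle,
\end{equation*}
where $\alpha_1,\ldots,\alpha_m$ are the weights of the circle isotropies of the action.

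Finally, the quotient $\zzz^{n-k}/\langle \alpha_j\rangle$ is trivial if and only if the vectors $\{\alpha_j\}$ span $\zzz^{n-k}$ as a $\zzz$-lattice, equivalently if and only if some $(n-k)$-element subcollection forms a $\zzz$-basis, equivalently if and only if the resulting $(n-k)\times(n-k)$ matrix has determinant $\pm 1$. This proves both directions of the equivalence.

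The main obstacle will be the rigorous verification that higher-rank isotropies impose no additional relations on $\pi_1(M^n)$ beyond those already coming from the circle isotropy weights on codimension-one strata. Concretely, at a singular orbit with isotropy a torus $T^s\subset T^{n-k}$ of rank $s\geq 2$, one must decompose the slice representation into $s$ two-dimensional sub-representations, each corresponding to a circle subgroup of $T^s$ that also occurs as an isotropy on an adjacent codimension-one stratum; together with effectiveness, this forces $T^s$ to be generated by these circles and so produces no new relations. For $k=2$ the verification reduces to the standard cohomogeneity-two torus-action analysis, whereas for $k\geq 3$ one proceeds either by induction on the stratification of $\partial D^k$ or by restricting to a generic $2$-disk transversal inside $D^k$.
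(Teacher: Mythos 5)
Your approach is genuinely different from the paper's. The paper does not compute $\pi_1(M^n)$ directly: for the forward implication it cites Corollary~2.9 of \cite{ES} (a generalization of a result of Kim--McGavran--Pak \cite{KMP}), which says that under these hypotheses the circle isotropy subgroups generate $T^{n-k}$ and there are at least $n-k$ of them, and then invokes Lemma~1.4 of Oh \cite{Oh3} to translate ``generate'' into $\det(\Delta)=\pm 1$; for the converse it cites Corollary~1.2 of Oh \cite{Oh2} and notes that Oh's argument uses only that the regular part of $M^n$ is $\mathring{D}^k\times T^{n-k}$. Your plan is to recover all of this by hand via a van Kampen computation over the orbit stratification, which is a reasonable and more self-contained route; but as written it has a gap in the last step.

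The final chain of equivalences you assert --- that $\zzz^{n-k}/\langle\alpha_1,\dots,\alpha_m\rangle=0$ iff the $\alpha_j$ generate $\zzz^{n-k}$, iff some $(n-k)$ of them form a $\zzz$-basis --- breaks at the second ``iff.'' This is false as a purely lattice-theoretic statement, even for primitive vectors. For example, in $\zzz^2$ the primitive vectors $(1,2)$, $(2,1)$, $(2,-1)$ generate $\zzz^2$ (the first two generate an index-three sublattice, and the third is nonzero in the $\zzz/3$ quotient), yet the three pairwise determinants are $-3$, $-5$, $-4$; no pair is unimodular. So ``the weights span'' does not by itself give the existence of an $(n-k)\times(n-k)$ unimodular weight matrix, which is what the theorem claims. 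What saves the day is exactly the geometric input you flagged under ``the main obstacle'' but deployed only toward the higher-rank relation count: smoothness, connectedness of isotropy, and the absence of exceptional orbits force the circle isotropy weights of \emph{adjacent} codimension-one strata of $\partial D^k$ to extend to a $\zzz$-basis of $\zzz^{n-k}$, because the slice at a lower-dimensional stratum is a standard toric corner model. It is this adjacency unimodularity --- not mere spanning --- that produces the desired $(n-k)$-element unimodular subcollection in the forward direction. (The reverse direction is unaffected: unimodular $\Rightarrow$ span $\Rightarrow$ $\pi_1=0$ is fine modulo the higher-rank relation count.) You need to make that structural fact explicit and invoke it in the last step as well, not only when arguing that $T^s$-strata contribute no new relations; once you do, your direct computation would indeed give a self-contained alternative to the paper's citation of Oh.
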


\begin{proof}   First assume that $M^n$ is simply connected.  
 Corollary 2.9 in \cite{ES}, which generalizes a result in Kim, McGavran and Pak \cite{KMP}, says that with the above hypotheses the isotropy subgroups of the $T^{n-k}$-action generate $T^{n-k}$, and there are at least $n-k$ distinct  circle isotropy subgroups. Let $\Delta$ be the matrix of the $(n-k)$ distinct circle isotropy groups.  It is shown that  the $n-k$ isotropy subgroups of the $T^{n-k}$-action generate $T^{n-k}$ if and only $\det(\Delta)=\pm 1$ in Lemma 1.4 in Oh \cite{Oh3}.

The converse is proven for $k=2$ in Corollary 1.2 in Oh \cite{Oh2}.  We can generalize the result to  $k\geq 2$ by observing that the proof 
 only requires that the regular part of the manifold be $\mathring{D}^k\times T^{n-k}$.  Hence we see that if $\det(\Delta)=\pm 1$, then $M^n$ is simply-connected.

\end{proof} 

\begin{remark} Note that Theorem \ref{cohomk} is optimal, since for $k=1$ there are cohomogeneity one $T^1$-actions on $\rrr P^2$ as well as cohomogeneity one $T^2$-actions on $L(p, 1)$ such that $\det(\Delta)=\pm 1$.
\end{remark}

Finally, we note that by work of \cite{GGK}, a smooth $T^4$-action on $M^6$, a closed, simply-connected, non-negatively curved Riemannian manifold corresponds to an isometric $T^4$-action and hence by Theorem \ref{ES}, it follows that $M^6$ is equivariantly diffeomorphic to $S^3\times S^3$ with a $T^3$-subaction of a linear $T^4$-action.
This now completes the proof of Case (B) of Part $2$ of Theorem \ref{isotropy}.



\begin{thebibliography}{999}


\bibitem{Br} G. Bredon,  {\em Introduction to compact transformation groups}, Academic Press 48 (1972). 
 
 \bibitem {DV} J. DeVito, {\em The classification of compact simply connected biquotients in dimension $6$ and $7$}, Math. Ann.,  DOI 10.1007/s00208-016-1460-8 (2016).
 

\bibitem{ES} C. Escher and C. Searle,  {\em Torus actions, maximality and non-negative curvature}, arXiv preprint,  arXiv:1506.08685v3[math.DG] 15 Nov 2017 .


\bibitem {FR}  F. Fang and X. Rong, {\em Homeomorphism Classification of Positively Curved Manifolds with Almost Maximal Symmetry Rank}, Math. Ann.  {\bf 332}  no. 1 (2005) 81--101.

\bibitem{GGK} F. Galaz-Garc\'ia and M. Kerin, {\em Cohomogeneity two torus actions on non-negatively curved manifolds of low dimension}, Math. Zeitschrift {\bf 276} (1-2) (2014), 133--152.

\bibitem{GGS1} F. Galaz-Garc\'ia  and C. Searle,   {\em Low-dimensional manifolds with non-negative curvature and maximal symmetry rank}, Proc. Amer. Math. Soc. {\bf 139} (2011) 2559--2564. 

\bibitem{GGS2} F. Galaz-Garc\'ia and C. Searle,   {\em Nonnegatively curved $5$-manifolds with almost maximal symmetry rank}, Geom. Topol.  {\bf 18} (2014), no. 3, 1397--1435. 

\bibitem{GGSp}  F. Galaz-Garc\'ia and W. Spindeler {\em Nonnegatively curved fixed point homogeneous $5$-manifolds}, Ann. Global Anal. Geom., {\bf 41}, (2012), no. 2, 253--263.  	
	
\bibitem{GS} K. Grove and C. Searle,  {\em Positively curved manifolds with maximal symmetry rank}, Jour. of Pure and Appl. Alg. {\bf 91}  (1994) 137--142.

\bibitem{GW} K. Grove and B. Wilking, {\em A knot characterization and $1$-connected nonnegatively curved 4- manifolds with 
circle symmetry}, Geom. Topol. {\bf 18} no. 5 (2014), 3091--3110.
 
\bibitem{I}  H. Ishida, {\em  Complex manifolds with maximal torus actions},  Journal f\"ur die reine und angewandte Mathematik (Crelles Journal), DOI: https://doi.org/10.1515/crelle-2016-0023 (2016).

\bibitem{J} P. E. Jupp, {\em Classification of certain $6$-manifolds},  Mathematical Proceedings of the Cambridge Philosophical Society {\bf 73} no. 2 (1973) 293--300.

\bibitem{KMP}   S. Kim, D. McGavran and J. Pak, {\em Torus group actions in simply connected manifolds}, Pacific J. Math., {\bf 53}  no.2 (1976), 435--444. 

\bibitem {K} B. Kleiner,  PhD thesis, U.C. Berkeley (1989).

\bibitem{Ku} S. Kuroki, {\em An Orlik-Raymond type classification of simply-connected six-dimensional torus manifolds with vanishing odd degree cohomology},  Pac. J. Math {\bf 280} no. 1 (2016), 89--114.

\bibitem{McGO} D. McGavran, H. S. Oh, {\em Torus actions on $5$- and $6$-manifolds}, Indiana Univ. Math. J., {\bf 31}, no. 3, (1982), 363--376.

\bibitem{M}  P. S. Mostert, {\em On a compact Lie group acting on a manifold}, Ann. of Math., {\bf 65}, no. 2,  (1957), 447-455.

\bibitem{N} W. D. Neumann, {\em 3-dimensional $G$-manifolds with 2-dimensional orbits}, 1968 Proc. Conf. on Transformation Groups (New Orleans, La., 1967), Springer, New York, 220--222.

\bibitem{Oh1} H. S. Oh, {\em $6$-dimensional manifolds with effective $T^4$ actions}, Topology Appl. 13 (1982), no. 2, 137--154.

\bibitem{Oh2} H. S. Oh, {Toral actions on $5$- and $6$-dimensional manifolds}, PhD thesis, U. Michigan (1980).

\bibitem{Oh3} H. S. Oh, {\em Toral actions on $5$-manifolds}, Transactions of the AMS {\bf 278} no. 1 (1983) 233--252.

\bibitem{OR3} P. Orlik, F. Raymond, {\em Actions of the torus on $4$-manifolds, II}, Topology, {\bf 13} (1974), 89--112.

\bibitem{Pak} J. Pak, {\em Actions of torus $T^n$ on $(n+1)$-manifolds $M^{n+1}$.} Pacific J. Math., {\bf 44}, no. 2 (1973), 671--674.

\bibitem{Pa} J. Parker, {\em 4-dimensional $G$-manifolds with 3-dimensional orbit}, Pacific J. Math, {\bf 125}, no. 1 (1986), 187--204.

\bibitem{P1} G. Perelman, ``The entropy formula for the Ricci Flow and its geometric applications",  arXiv:math.DG/0211159, 
11 Nov 2002. 

\bibitem{P2} G. Perelman, ``Ricci Flow with surgery on three-manifolds", arXiv:math.DG/0303109, 10 Mar 2003. 

\bibitem{P3} G. Perelman, ``Finite extinction time for the solutions to the Ricci flow on certain three-manifolds",  arXiv:math.DG/0307245, 17 Jul 2003. 

 \bibitem{Ro} X. Rong, {\em Positively curved manifolds with almost maximal symmetry rank}, Geom. Ded. {\bf 95} (2002) 157--182.
 
\bibitem{SY} C. Searle and  D. Yang, {\em On the topology of non-negatively curved simply-connected 4-manifolds with continuous symmetry}, Duke Math. J. {\bf 74} no. 2 (1994) 547--556.

\bibitem{Spi} W. Spindeler, {\em $S^1$-actions on $4$-manifolds and fixed point homogeneous manifolds of nonnegative curvature}, PhD Thesis,  Westf\"alische Wilhelms-Universit\"at M\"unster (2014).

\bibitem{Wa}   C. T. C. Wall, {\em Classification problems in differential topology - IV},  Topology {\bf 6}  (1967)  273--296. 

 \bibitem{Wie} M. Wiemeler, {\em Torus manifolds and non-negative curvature}, J. London Math. Soc. (2015) {\bf 91} no. 3, 667-692.
 
 \bibitem{Wi1} B. Wilking,  {\em Torus actions on manifolds of positive sectional curvature},  Acta Math., {\bf 191}  no. 2  (2003) 259--297. 

\bibitem{Z1} A. V. Zhubr, {\em A decomposition theorem for simply connected $6$-manifolds}, LOMI seminar notes {\bf 36} (1973)  40--49 (Russian).

\bibitem{Z2} A. V. Zhubr, {\em Classification of simply connected six-dimensional spinor manifolds}, (English) Math. USSR, Izv. {\bf 9} (1975), (1976), 793--812.

\bibitem{Z3} A. V. Zhubr, {\em Closed simply connected six-dimensional manifolds: proofs of classification theorems}, Algebra i Analiz {\bf 12} (2000), no.4, 126--230.
 
 \end{thebibliography}
\end{document}